\def\namedlabel#1#2{\begingroup
	#2%
	\def\@currentlabel{#2}%
	\phantomsection\label{#1}\endgroup
}
\def\titlerunning#1{\gdef\titrun{#1}}
\def\author#1{\gdef\autrun{\def\and{\unskip, }#1}\gdef\@author{#1}}
\def\subjclass#1{{\renewcommand{\thefootnote}{}%
		\footnote{\emph{Mathematics Subject Classification (2010):} #1}}}
\def\keywords#1{\par\medskip
	\noindent\textbf{Keywords.} #1}
\newtheorem{theorem}{Theorem}[section]
\newtheorem{lemma}[theorem]{Lemma}
\newtheorem{proposition}[theorem]{Proposition}
\theoremstyle{definition}
\newtheorem{definition}[theorem]{Definition}
\DeclareMathOperator{\cov}{Cov}
\DeclareMathOperator{\Var}{Var}
\numberwithin{equation}{section}
\newcommand{\eps}{\varepsilon}
\newcommand{\R}{\mathbb{R}}
\newcommand{\I}{\mathbb{I}}
\newcommand{\Z}{\mathbb{Z}}
\newcommand{\N}{\mathbb{N}}
\newcommand{\p}{\mathbb{P}}
\newcommand{\Cf}{\mathrm{C}}
\newcommand{\B}{\mathcal{B}}
\newcommand{\F}{\mathcal{F}}
\newcommand{\E}[1]{\ensuremath{\mathbb E \left[ #1 \right]}}
\newcommand{\e}{\ensuremath{\mathbb E}}
\newcommand{\cP}{\mathcal{P}}
\newcommand{\fM}{\mathfrak{M}}
\title{A Central Limit Theorem for \\ Modified Massive Arratia Flow}
\titlerunning{CLT for MMAF}
\author{Andrey Dorogovtsev\footnote{Institute of Mathematics of NAS of Ukraine, Tereschenkivska st. 3, 01024 Kyiv, Ukraine. E-mail: \href{mailto:andrey.dorogovtsev@gmail.com}{andrey.dorogovtsev@gmail.com}} \and 
Vitalii Konarovskyi\footnote{Faculty of mathematics, informatics and natural sciences, University of Hamburg, Bundesstraße 55, 20146 Hamburg, Germany. E-mail: \href{mailto:vitalii.konarovskyi@uni-hamburg.de}{vitalii.konarovskyi@uni-hamburg.de}}\, $^*$  \and
Max von Renesse\footnote{Fakult\"{a}t f\"{u}r Mathematik und Informatik, Universit\"{a}t Leipzig, Augustusplatz 10, 04109 Leipzig, Germany. E-mail: \href{mailto:renesse@math.uni-leipzig.de}{renesse@math.uni-leipzig.de}}}
\def\subjclass#1{{\renewcommand{\thefootnote}{}%
\footnote{\emph{Mathematics Subject Classification (2020):} #1}}}
\def\keywords#1{{\renewcommand{\thefootnote}{}%
\footnote{\emph{Keywords:} #1}}}
\begin{document}

\maketitle

\begin{abstract}
The modified massive Arratia flow is a model for the dynamics of passive particle clusters moving in a random fluid that accounts for the effects of mass aggregation. We show a central limit theorem for the point process associated to the cluster positions when the system is started from a uniform configuration.  The critical mixing estimate is obtained by coupling the system to countably many independent Brownian motions.

\end{abstract}
\subjclass{Primary 
  60F05, 60K35; 
Secondary  
60H40, 60G57
}

\keywords{Modified massive Arratia flow, central limit theorem, occupation measure}

\section{Introduction} 

The modified massive Arratia flow (MMAF) introduced in~\cite{Konarovskyi:TVP:2010,Konarovskyi:AP:2017}  is a model for the evolution of an ensemble of passive particle clusters in a random 1D-fluid. It describes the positions and sizes of a family of clusters moving as independent scaled Brownian motions on the real line as long as no collisions between clusters occur, the diffusion rate of each cluster being inverse proportional to its mass. In case of a collision clusters coalesce and form a new cluster of aggregate size (cf. Definition \ref{def:MMAF} below). This model is an overdamped intertial version of the well-studied Arratia flow model of coalescing Brownian motions that has no masses and no diffusivity rescaling \cite{Arratia:1979,Dorogovtsev:2004,Dorogovtsev:2010,Schertzer:2014,Berestycki:2015,Korenovska:2017,Schertzer:2017,Vovchanskii_Dor:2013,Glinyanaya:2018,Riabov:2017,Dorogovtsev_Vovchanskyi_2:2020,Dorogovtsev_book:2024}. Variants of the MMAF  for different initial conditions, asymptotic properties of the trajectories and its large deviations were investigated in \cite{Konarovskyi:TVP:2010,Konarovskyi:UMJ:2010,Konarovskyi:2014,Konarovskyi:EJP:2017,Konarovskyi:AP:2017,Marx:2018,Konarovskyi:CPAM:2019,Marx:2021,Konarovskyi:CD:2020}. A reversible extension of the MMAF model featuring a splitting mechanism was introduced and studied in \cite{Konarovskyi:CFWDPA:2022,Konarovskyi:CFWD:2024}.

In this note we study the point process induced from the cluster positions when the MMAF system is started from the uniform configuration on the integer lattice $\Z$. Unlike for the standard Arratia flow \cite{Munasinghe:2006,Tribe:2011} precise formulas or the structure of the associated point process of the cluster ensemble are yet unknown, but one can ask for asymptotic properties. The aim of this work is to show a central limit theorem for the  occupation measure in Theorem \ref{the_clt_for_mmaf} below. The statement is analogous to the corresponding result in \cite{Dorogovtsev_Hlyniana:2023} for the Arratia flow, however in the MMAF-case the correlation structure is more involved,  leading to a different approach for the critical mixing estimate in Section~\ref{sub:mixing_property_of_modified_massive_arratia_flow}. The main idea is based on coupling the MMAF to a countable family of independent Brownian motions, which is the main technical part of this work.

\section{Statement of Main Result}%

\label{sec:stationary_mmaf}

For a rigorous statement of our result we recall the definition of the MMAF from \cite{Konarovskyi:TVP:2010}, where we consider the special case of a uniform starting configuration on $\Z\subset \R$.

\begin{definition} \label{def:MMAF}

A family of continuous processes $\{ x_k(t),\ t\geq 0,\ k \in \Z \}$ is called a {\it modified massive Arratia flow started from $\Z$} if it satisfies the following properties 
\begin{enumerate}
  \item [(\namedlabel{F1}{F1})] for each $k \in \Z$ the process $x_k$ is a continuous square-integrable martingale with respect to the filtration 
    \[
      \F_t=\sigma(x_k(s),\ s\leq t,\ k \in \Z), \quad t\geq 0;
    \]

  \item [(\namedlabel{F2}{F2})] for each $k \in \Z$, $x_k(0)=k$;

  \item [(\namedlabel{F3}{F3})] for each $k<l$ from $\Z$ and $t\geq 0$, $x_k(t)\leq x_l(t)$;

  \item [(\namedlabel{F4}{F4})] for each $k \in \Z$ the quadratic variation of $x_k$ equals
    \[
      \langle x_k\rangle_t= \int_{ 0 }^{ t } \frac{ ds }{ m_k(s) }, \quad t\geq 0, 
    \]
    where $m_k(t)=\#\{ l:\ \exists s\leq t\ \ x_k(s)=x_l(s) \}$ and $\# A$ denotes a number of elements in $A$;
    
  \item [(\namedlabel{F5}{F5})] for each $k,l \in \Z$
    \[
      \langle x_k , x_l \rangle_{t\wedge\tau_{k,l}}=0, \quad t\geq 0,
    \]
    where $\tau_{k,l}=\inf\left\{ t:\ x_k(t)=x_l(t) \right\}$.
\end{enumerate}
\end{definition}

The existence of such a family of processes is shown in  \cite[Theorem 2]{Konarovskyi:TVP:2010}. Moreover, conditions~\eqref{F1}-\eqref{F5} uniquely determine the distributions of $(x_k)_{k \in \Z}$ in the space $\Cf\left( [0,\infty) \right)^{\Z}$.

We can introduce the associated \textit{occupation measure} induced on the real line by 

\[
  \mu_t(A)=\#(A \cap \{ x_k(t),\ k \in \Z \}), \quad A \in \B(\R ).
\]

Let  $\cP$ denote the set of bounded measurable functions $f: \R \to \R $ with period one, i.e. $f(x)=f(x+1)$ for all $x \in \R $.
For every $f \in \cP$ and $k \in \Z$ denote 
$$
  A_{k,t}f:= \int_{ k-1 }^{ k } f(u)\mu_t(du). 
$$
In Proposition~\ref{prop_finiteness_of_all_moments_for_bounded_functions} below, we show that the random variables $A_{k,t}f$, $k \in \Z$, have finite moments of every order. Moreover, it is easy to see that the sequence $(A_{k,t}f)_{k \in \Z}$ is stationary. This directly follows from the fact that the distributions of $(x_k)_{k \in \Z}$ and $(\tilde x_k^l)_{k \in \Z}$ coincides in $\Cf([0,\infty))^{\Z}$ for every $l \in \Z$, where $\tilde x_k^l(t)=x_{k+l}(t)-l$, $t\geq 0$.

With this our main result reads as follows.
\begin{theorem} 
  \label{the_clt_for_mmaf}
  For every $f \in \cP$ and $t>0$ the sequence
  \[
    Y_t^n(f)=\frac{ \sum_{ k=1 }^{ n }(A_{k,t}f-\E{ A_{k,t}f})  }{ \sqrt{ n } }, \quad n\geq 1,
  \]
  converges in distribution to a Gaussian random variable with mean 0 and variance 
  \[
    \sigma^2_t(f)=\Var{ A_{0,t}f } +2 \sum_{ k=1 }^{ \infty } \cov\left( A_{0,t}f,A_{k,t}f \right).
  \]
\end{theorem}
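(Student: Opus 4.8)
The plan is to deduce the CLT for $Y_t^n(f)$ from a CLT for weakly dependent stationary sequences. The two structural facts I would lean on are already available from the excerpt: the sequence $(A_{k,t}f)_{k\in\Z}$ is stationary (by the shift-invariance of the law of the flow noted before the theorem) and has finite moments of every order (Proposition~\ref{prop_finiteness_of_all_moments_for_bounded_functions}). Given these, the whole argument hinges on a single quantitative decorrelation estimate, after which the conclusion follows from a standard blocking argument. So the plan splits into two parts: first establish a mixing estimate for $(A_{k,t}f)_k$, and then feed it into a Bernstein-type big-block/small-block scheme to obtain asymptotic normality with the stated long-run variance.

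The decisive input is the decorrelation of $A_{0,t}f$ and $A_{k,t}f$ as $|k|\to\infty$. The point is that $A_{k,t}f$ is determined by the time-$t$ positions of the clusters occupying the unit interval $[k-1,k)$, and two such functionals at widely separated indices $0$ and $k$ can become dependent only through coalescence of clusters that started at mutual distance of order $|k|$; since over the fixed horizon $t$ each cluster travels a distance of order $\sqrt{t}$, such interaction is rare for large $|k|$. To make this quantitative I would build, on a common probability space, a coupling of the MMAF with a countable family of independent Brownian motions, arranged so that on an event of probability close to $1$ the clusters feeding $A_{0,t}f$ and those feeding $A_{k,t}f$ evolve as two \emph{independent} Brownian sub-systems up to time $t$. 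The coupling must track the cluster masses, because the diffusion rate $1/m_k$ depends on them; the key geometric observation is that if the groups near $0$ and near $k$ never collide by time $t$, then each group is an autonomous MMAF and the two groups are conditionally independent. Bounding the probability of a collision between the groups (a crossing estimate for the mass-rescaled coalescing particles started order-$|k|$ apart) then gives a quantitative bound on the strong-mixing coefficient $\alpha(k)$, and in particular the summability $\sum_{k\geq1}|\cov(A_{0,t}f,A_{k,t}f)|<\infty$ that makes $\sigma_t^2(f)$ well-defined.

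Granting the mixing estimate, I would first record the elementary variance asymptotics: by stationarity,
\[
  \frac1n\Var\Big(\sum_{k=1}^n A_{k,t}f\Big)
  =\Var(A_{0,t}f)+2\sum_{k=1}^{n-1}\Big(1-\tfrac kn\Big)\cov(A_{0,t}f,A_{k,t}f)
  \xrightarrow[n\to\infty]{}\sigma_t^2(f),
\]
the limit following from summability of the covariances by dominated convergence. For the distributional statement I would use the big-block/small-block decomposition: partition $\{1,\dots,n\}$ into alternating large blocks of length $p_n$ and spacer blocks of length $q_n$ with $q_n\to\infty$, $q_n/p_n\to0$ and $p_n/n\to0$. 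The spacers contribute negligibly to the variance by the display above, while the mixing estimate lets one replace the standardized large-block sums by independent copies with a characteristic-function error controlled by $\alpha(q_n)$ times the number of blocks, which tends to $0$ for the chosen rates. The finite higher moments then supply the Lyapunov condition for the resulting row-wise independent triangular array, and the Lindeberg--Feller theorem yields convergence to $\mathcal{N}(0,\sigma_t^2(f))$. Alternatively one could invoke a CLT for strongly mixing stationary sequences directly (e.g.\ Ibragimov's theorem), the finiteness of all moments relaxing the required rate to near-summability of $\alpha(k)$.

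The crux, and the main obstacle, is the coupling together with the attendant mixing estimate. Making rigorous the assertion that distant groups of clusters evolve as independent Brownian sub-systems, while simultaneously accounting for the mass-dependent rescaling $1/m_k$ of the diffusivity, is considerably more delicate than in the plain Arratia flow, and this is precisely where the correlation structure of the MMAF is more involved. The estimate is moreover \emph{critical}: the decay rate in $|k|$ produced by the crossing bound is only just strong enough to ensure summable covariances and to control the block-replacement error, so the rates $p_n,q_n$ and the moment order used in the Lyapunov step must be balanced with care.
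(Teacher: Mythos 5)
Your overall skeleton coincides with the paper's: stationarity of $(A_{k,t}f)_{k\in\Z}$, finite moments of every order (Proposition~\ref{prop_finiteness_of_all_moments_for_bounded_functions}), and a strong mixing estimate fed into a CLT for stationary mixing sequences (the paper simply cites \cite[Theorem~18.5.3]{Ibragimov:1971}; your Bernstein big-block/small-block scheme is an acceptable substitute once the mixing rate is known), and you even name the correct device, namely coupling to countably many independent Brownian motions. The genuine gap is in how you justify the mixing estimate, which is the entire technical content of the paper. You assert that $A_{0,t}f$ and $A_{k,t}f$ ``can become dependent only through coalescence of clusters that started at mutual distance of order $|k|$,'' so that a single crossing estimate over distance $k$ (with displacements of order $\sqrt{t}$) would control $\alpha(k)$. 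That is not the dependence mechanism: information propagates through nearest-neighbour coalescence \emph{chains} among the roughly $k$ particles started between $0$ and $k$, and each link of such a chain requires only an order-one displacement. The obstruction to independence is thus percolation-like, its probability is nowhere near Gaussian in $k$, and correspondingly Proposition~\ref{pro_mixing_property} obtains only the subexponential rate $\alpha_i(j)\le Ce^{-\beta\sqrt{j-i}}$, via a block argument. Your crossing heuristic targets the wrong event and suggests an unattainable $e^{-ck^2}$-type rate; an estimate built on it would simply fail to bound $\alpha(k)$.

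The second unproved step is the claim that ``if the groups never collide, each group is an autonomous MMAF and the two groups are conditionally independent.'' Property~\eqref{F5} gives zero quadratic covariation before meeting, not independence, and conditioning on a non-collision event of the flow itself can create dependence between the two sides; this is exactly the subtlety of local dependence on random regions. The paper avoids conditioning altogether: it works on events $A^{\pm}_{l,j}(t)$ formulated in terms of the \emph{driving} Brownian motions (a half-integer level that $w_l,\dots,w_{l+j}$ never exceed while $w_{l+j+1}$ never crosses it downward), on which the true flow to the right (resp.\ left) of the gap coincides pathwise with auxiliary one-sided flows $X^{l+1,+}$ (resp.\ $X^{l,-}$) built from the disjoint, hence genuinely independent, families $(w_m)_{m\ge l+1}$ and $(w_m)_{m\le l}$ (Proposition~\ref{pro_special_mmaf} and Lemma~\ref{lem_gaps_in_mmaf}); the shift to $l+1$ on the right is needed precisely so that the two noise families share no Brownian motion. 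The mixing bound then reduces to showing that a gap occurs within distance $N$ of the midpoint except with probability $Ce^{-\beta_T(t)[(\sqrt{N}-\sqrt{2})\vee 1]}$, which Lemma~\ref{lem_estimate_of_appearing_gaps_in_intervals} proves by running independent gap attempts on disjoint blocks of linearly growing size (whence the $\sqrt{N}$, since $p$ blocks cover distance of order $p^2$). Without these two ingredients --- a separating event expressed at deterministic levels in the noise, and disjoint-noise one-sided comparison flows --- your mixing step is an assertion rather than a proof. The remainder of your argument is fine once $\alpha(k)\le Ce^{-\beta\sqrt{k}}$ is in hand, since all moments are finite and $\sum_{k}\alpha(k)^{\delta/(2+\delta)}<\infty$ for every $\delta>0$.
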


The proof of the statement is based on the  classical central limit theorem for stationary sequences, c.f\ \cite[Theorem~18.5.3]{Ibragimov:1971}. Hence, the main task is to establish quantitative mixing for the MMAF model which we achieve below by coupling to independent Brownian motions. 

\section{Mixing estimate }%
\label{sub:mixing_property_of_modified_massive_arratia_flow}

 Fixing $f \in \cP$ and $t>0$ and we introduce the mixing coefficient for $i \in \Z$ 
\[
  \alpha_i(j)= \sup\left\{ |\p(A \cap B)-\p(A)\p(B)|,\ A \in \fM_{-\infty}^i,\ B \in \fM_j^{\infty} \right\}, \quad j \in \Z,\ \ j>i,
\]
where $\fM_{a}^b=\sigma\{ A_{k,t}f,\ a\leq k\leq b \}$ for $-\infty\leq a<b\leq \infty$. Our goal is to prove the following proposition. 

\begin{proposition} 
  \label{pro_mixing_property}
  There exist constants $C>0$ and $\beta>0$ depending only on $f$ and $t$ such that
  \[
    \alpha_i(j)\leq C e^{-\beta \sqrt{ j-i }}
  \]
  for all $i<j$ from $\Z$.
\end{proposition}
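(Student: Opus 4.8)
The plan is to bound the mixing coefficient by the probability of a single ``bad'' coupling event and then to estimate that probability. The structural observation is that, by the ordering property~\eqref{F3} together with~\eqref{F4}--\eqref{F5}, the clusters of the MMAF are carried by disjoint blocks of consecutive indices: if the particles $x_p$ and $x_{p+1}$ never meet before time $t$, then the sub-families $\{x_k : k\le p\}$ and $\{x_k : k\ge p+1\}$ never share a common cluster, so each evolves with its own masses $m_k(s)$ and its own quadratic variation. I would make this precise through the coupling announced in the introduction: realise $(x_k)_{k\in\Z}$ as a measurable image of a countable family of independent Brownian motions $(w_k)_{k\in\Z}$, with $x_k(s)=k+w_k(s)$ up to the first coalescence of the $k$-th cluster (consistent with $m_k\equiv 1$ and~\eqref{F2} before any collision), and arranged so that the dynamics of each cluster depend only on the $w_k$ indexed by the particles it has absorbed. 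On the event that some gap in $\{i,\dots,j-1\}$ survives up to time $t$, the left block $\fM_{-\infty}^i$ and the right block $\fM_j^\infty$ are then functionals of disjoint sub-families of $(w_k)$, hence independent.

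Granting this, the first step is the reduction
\[
  \alpha_i(j)\le 2\,\P{G^c},
\]
where $G$ is the event that at least one gap in $\{i,\dots,j-1\}$ survives up to time $t$; since by~\eqref{F3} all intermediate pairs coalesce precisely when the two extreme particles lie in a common cluster, one has $\P{G^c}=\P{x_i(t)=x_j(t)}$. The subtlety is that the location of the surviving gap is itself random and measurable with respect to both blocks, so conditioning on $G$ need not preserve the product structure directly; I would handle this by decomposing $G$ according to the leftmost surviving gap $P$, using that on $\{P=p\}$ the left block is a functional of $(w_k)_{k\le p}$ and the right block of $(w_k)_{k\ge p+1}$, and then summing, so that only $\P{G^c}$ survives in the final estimate.

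The second step estimates the bad event. Up to the meeting time $\tau_{i,j}$ the processes $x_i$ and $x_j$ are orthogonal martingales by~\eqref{F5}, so $x_j-x_i$ is a continuous martingale started at $j-i$ with quadratic variation $\langle x_i\rangle+\langle x_j\rangle$ of rate at most $2$; by the Dubins--Schwarz representation it is a Brownian motion run for time at most $2t$, and a maximal inequality controls $\P{x_i(t)=x_j(t)}$ by a Gaussian factor in $j-i$. The bound for $\alpha_i(j)$ then combines this crossing estimate with the probability that the independent-Brownian coupling fails to decouple the two blocks; it is this latter, coupling-theoretic quantity that dominates, and balancing a displacement range against the cost of controlling the coalescences over a window of width $j-i$ is what produces the stretched-exponential rate $Ce^{-\beta\sqrt{j-i}}$ rather than the sharper Gaussian one. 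Taking the supremum over $A\in\fM_{-\infty}^i$ and $B\in\fM_j^\infty$ yields the proposition.

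The main obstacle is the coupling itself, namely showing that the MMAF can be driven by independent Brownian motions so that the law of each sub-block factorises through the $w_k$ indexed by that block. Before any collision the clusters are genuinely independent time-changed Brownian motions, but a single coalescence both merges the index sets and replaces the two rates by the common rate $m_k^{-1}$, so the delicate point is to propagate the ``disjoint driving randomness'' property across all coalescence events up to time $t$ and to quantify the error it introduces. This propagation through the mass-dependent dynamics, rather than the crossing estimate, is the technical heart of the argument and is precisely what governs the admissible exponent $\tfrac12$ in the mixing bound.
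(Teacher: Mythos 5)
You have the right philosophy (couple the MMAF to independent drivers $w_k$ and exploit a spatial separation event), but the decisive step of your plan — the reduction $\alpha_i(j)\le 2\,\p(G^c)$ through the event $G$ that some \emph{particle} gap in $\{i,\dots,j-1\}$ survives — does not hold, and your proposed fix (decomposing over the leftmost surviving gap $P$) does not repair it. The event $\{P=p\}$ is a joint functional of both driver families, so even granting that on $\{P=p\}$ the left and right blocks agree a.s.\ with functionals of $(w_k)_{k\le p}$ and $(w_k)_{k\ge p+1}$, the probability $\p(A\cap B\cap\{P=p\})$ does not factor, and after summing over $p$ the cross terms do not ``leave only $\p(G^c)$'': agreement with independent functionals \emph{on an event} yields nothing unless the event can be dropped at a small additive cost or is itself (close to) a product event. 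Your sketch is also internally inconsistent: if $\alpha_i(j)\le 2\,\p(G^c)=2\,\P{x_i(t)=x_j(t)}$ were true, your Dubins--Schwarz step would give a Gaussian bound of order $e^{-c(j-i)^2/t}$, yet you then correctly remark that the coupling-failure probability dominates and only yields $e^{-\beta\sqrt{j-i}}$ --- but that quantity never appears in your step-one bound, so the two steps cannot both be right. The paper avoids conditioning entirely: it replaces the random particle gap by \emph{barrier events in the drivers}, $A^{\pm}_{l,j}(t)$, requiring $w_l,\dots,w_{l+j}$ to stay below the fixed level $l+j+\tfrac12$ while $w_{l+j+1}$ stays above it; on these events the full flow coincides beyond the barrier with the one-sided flows $X^{l,-}$, $X^{l+1,+}$ (Lemma~\ref{lem_gaps_in_mmaf}), which are \emph{unconditionally} functionals of the disjoint families $(w_k)_{k\le l}$ and $(w_k)_{k\ge l+1}$. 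The proof then inserts and removes the events $B^{\pm}$ with additive errors $\p\left(\left(B^{\pm}\right)^c\right)$ and finishes with plain independence.

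The second genuine omission is the quantitative heart: you never estimate the coupling-failure probability, only assert that ``balancing'' produces the stretched exponential. In the paper this is Lemma~\ref{lem_estimate_of_appearing_gaps_in_intervals}, and its mechanism explains where the exponent $\tfrac12$ comes from: a single barrier at a fixed location fails with probability bounded below by a constant depending on $t$ (no decay in the window width), so decay in $N=j-i$ requires many essentially independent barrier attempts; these are made on disjoint driver blocks of \emph{growing} widths $N_k=p+k$ (growth is needed so that the correction events $M_{j,n}$, stating that the running maximum is realized within the last block, fail with summable Gaussian probability), and fitting $p$ such blocks consumes a window of width $\tfrac{(3p+1)p}{2}$, i.e.\ only $p\sim\sqrt{N}$ attempts fit. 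Without this multi-scale construction you have no route to $Ce^{-\beta\sqrt{j-i}}$; and conversely, the crossing estimate via Dubins--Schwarz in your step two plays no role in the actual proof. In short: the coupling framework you describe matches Proposition~\ref{pro_special_mmaf}, but both load-bearing components --- a factorization mechanism that survives the randomness of the separation event, and the block-barrier estimate producing the $\sqrt{j-i}$ rate --- are missing.
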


The idea of proof of the proposition is to construct a modified massive Arratia flow in such a way that particles which came to $(-\infty,l]$ and $[k,+\infty)$ at time $t$ are ``almost'' independent. An important role is played by the following construction  from~\cite{Konarovskyi:TVP:2010}.

Let $T>0$ be fixed and $\Cf_{\leq }([0,T],\R^n)$ denote the Banach space of continuous functions $f:[0,T] \to \R^n$ satisfying $f_1(0)\leq \dots\leq f_n(0)$. We equip $\Cf_{\leq }([0,T],\R^n)$ with the uniform norm. We next define a map $F_n:\Cf_{\leq }([0,T], \R^{2n+1}) \to \Cf_{\leq }([0,T], \R^{2n+1} )$ as follows. For $f \in \Cf_{\leq}([0,T], \R ^{2n+1})$ we set $\tau^{(0)}=0$ and 
\[
  \Theta^{(0)}:=\{ \pi \subseteq  \{ -n,\dots,n \}:\ k,l \in \pi\iff f_k(0)=f_l(0) \}.
\]
Note that $\Theta^{(0)}$ is a partition of the set $\{ -n,\dots,n \}$. Let $\pi_k^{(0)}$ denote the set from $\Theta^{(0)}$ which contains $k \in \{ -n,\dots,n \}$. Define
\[
  f_k^{(0)}(t)=f_{\tilde{k}}(0)+ \frac{1}{ \sqrt{m^{(0)}_k} }\left(f_{\tilde{k}}^{(0)}(t)-f_{\tilde{k}}^{(0)}(0)\right), \quad t \in [0,T],
\]
where $\tilde{k}$ is the element from $\pi_k^{(0)}$ with the minimal absolute value and $m^{(0)}_k=\# \pi_k^{(0)}$. By induction, we construct $f^{(p)} \in \Cf_{\leq }([0,T],\R^{2n+1})$ for all $p \in \{ 1,\dots,2n \}$ as follows. For $k \in \{ -n,\dots,n-1 \}$, set 
\[
  \tau_k^{(p)}= \inf\left\{ t\geq 0:\ f_k^{(p-1)}(t)=f_{k+1}^{(p-1)}(t) \right\}\wedge T,
\]
and 
\[
  \tau^{(p)}= \inf\left\{ \tau_k^{(p)}>\tau^{(p-1)}:\ k \in \{ -n,\dots,n-1 \} \right\},
\]
where as usual $\inf \emptyset=+\infty$. Let $\Theta^{(p)}$ be the partition of $\{ -n,\dots,n \}$ defined by 
\[
  \Theta^{(p)}:=\{ \pi \subseteq \{ -n,\dots,n \}:\ k,l \in \pi \iff f_{k}^{(p-1)}(\tau^{(p)})=f_{l}^{(p-1)}(\tau^{(p)}) \}.
\]
Let $\pi_k^{(p)}$ be the element of $\Theta^{(p)}$ which contains $k \in \{ -n,\dots,n \}$. If $\tau^{(p)}=T$, we set $f_{k}^{(p)}(t)=f_k^{(p-1)}(t)$, $t \in [0,T]$. Otherwise, 
\[
  f_k^{(p)}(t)=
  \begin{cases}
    f_k^{(p-1)}(t), & \mbox{ if } 0\leq t< \tau^{(p)}, \\
    f_{\tilde{k}}^{(p-1)}(\tau^{(p)})+\frac{ \sqrt{ m_k^{(p-1)} } }{ \sqrt{ m_k^{(p)} } }\left( f_{\tilde{k}}^{(p-1)}(t)-f_{\tilde{k}}^{(p-1)}(\tau^{(p)}) \right), & \mbox{ if } \tau^{(p)}\leq t\leq T, 
  \end{cases}
\]
where $\tilde{k}$ is the element from $\pi_k^{(p)}$ with the minimal absolute value and $m^{(p)}_k=\# \pi_k^{(p)}$. We now define 
\[
  F_n(f)=f^{(2n)}.
\]

We also define maps
\[
  F_n^+:\Cf_{\leq }([0,T], \R^{n+1}) \to \Cf_{\leq }([0,T], \R^{n+1} )
\]
and 
\[
  F_n^-:\Cf_{\leq }([0,T], \R^{n+1}) \to \Cf_{\leq }([0,T], \R^{n+1} )
\]
similarly to $F_n$, where the set $\{ -n,\dots,n \}$ is replaced by $\{ 0,\dots,n \}$ and $\{ -n,\dots,0 \}$, respectively. By the construction, it is clear that maps $F_n$, $F_n^+$ and $F_n^-$ are measurable.

For $l \in \Z, n \in \N$ and a fixed family of independent Brownian motions $w_k(t)$, $t \in [0,T]$, $k \in \Z$, on $\R $ with diffusion rate 1 and $w_k(0)=k$, we define continuous stochastic processes $X^{l,n}_k$, $k \in \{ l-n,\dots,l+n \}$, $X^{l,n,+}_k$, $k \in \{ l,\dots,l+n \}$, and $X^{l,n,-}_k$, $k \in \{ l-n,\dots,l \}$, by
\[
  (X^{l,n}_{l-n},\dots,X^{l,n}_{l+n}):=F_n(w_{l-n},\dots,w_{l+n}), \quad (X^{l,n,+}_{l},\dots,X^{l,n,+}_{l+n}):=F_n^+(w_{l},\dots,w_{l+n})
\]
and 
\[
  (X^{l,n,-}_{n-l},\dots,X^{l,n,-}_{l}):=F_n^-(w_{n-l},\dots,w_l).
\]

We next recall the following lemma proved in~\cite[Lemma~5]{Konarovskyi:TVP:2010}.

\begin{lemma} 
  \label{lem_existence_of_geps}
  Let $w_k(t)$, $t \in [0,T]$, $k \in \N_0=\N \cup \{ 0 \}$, be a family of independent Brownian motions on $\R $ with diffusion rate 1 and $w_k(0)=k$. Then for every $\eps \in \left(0, \frac{1}{ 2 }\right)$ the equality 
  \[
    \p\left\{ \max\limits_{ k \in \{ 0,\dots,n \} } \max\limits_{ t \in [0,T] }w_k(t)\leq n+ \frac{1}{ 2 }, \quad \min\limits_{ t \in [0,T] }w_{n+1}(t)> n+ \frac{1}{ 2 }+\eps\  \mbox{i.o.}  \right\}=1
  \]
  holds.
\end{lemma}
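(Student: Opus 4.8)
\textbf{Proof strategy for Lemma~\ref{lem_existence_of_geps}.}
The plan is to identify, for each fixed ``epoch'' $N\in\N$, an event $E_N$ that is measurable with respect to the increments of the Brownian motions on a time-window disjoint from the windows used by the other epochs, such that the occurrence of $E_N$ forces the event inside the probability to hold (after a suitable spatial translation of indices is accounted for by the homogeneity of the lattice start). Having arranged this independence-across-epochs structure, I would then show $\sum_N \p(E_N)=\infty$ and invoke the second Borel--Cantelli lemma to conclude the ``i.o.'' statement with probability one. The key point to exploit is that the event is a \emph{tail-type} infinitely-often statement, so it suffices to produce infinitely many independent (or sufficiently decoupled) chances for it to happen, each with a probability bounded below by a positive constant.

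First I would fix $\eps\in(0,\tfrac12)$ and analyze the single-epoch probability
\[
  p:=\p\left\{ \max_{k\in\{0,\dots,n\}}\max_{t\in[0,T]} w_k(t)\le n+\tfrac12,\ \ \min_{t\in[0,T]} w_{n+1}(t)>n+\tfrac12+\eps \right\}.
\]
Because the $w_k$ are independent, this factorizes: the first event constrains the $n+1$ motions $w_0,\dots,w_n$ (each started at $k\le n$, so each needs to stay below $n+\tfrac12$, i.e.\ below its start plus a nonnegative buffer), and the second constrains $w_{n+1}$ (started at $n+1$, required to stay above $n+\tfrac12+\eps$, i.e.\ never to drop more than $\tfrac12-\eps>0$ below its start). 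Each factor is a positive-probability barrier-crossing event for a Brownian path on $[0,T]$, and by the reflection principle / Gaussian tail bounds each factor is bounded below away from $0$; crucially, the buffer for $w_{n+1}$ is $\tfrac12-\eps>0$ uniformly, and the buffer for $w_k$ is $n+\tfrac12-k\ge\tfrac12$, so $p\ge p_0>0$ for a constant $p_0$ depending only on $T$ and $\eps$ (in fact one can take the worst single-particle factor raised to the relevant power; the product stays bounded below because the \emph{hardest} constraints, for $k=n$ and for $k=n+1$, involve only the fixed gap $\tfrac12$ and $\tfrac12-\eps$).

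The main obstacle, and the reason the naive Borel--Cantelli does not apply directly, is that the events for consecutive integers $n$ and $n+1$ are \emph{not} independent: they share the same Brownian motions and overlap in both their index-sets and their common time-interval $[0,T]$. I would resolve this by passing to a subsequence of widely separated indices and using a restart/translation argument. Concretely, using the stationarity built into the lattice initial condition together with the strong Markov property, I would select an increasing sequence $n_1<n_2<\cdots$ and, on a sequence of disjoint time-intervals $[s_j,s_j+T]$ obtained by shifting, realize mutually independent copies of the single-epoch event, each of probability at least $p_0>0$; since $\sum_j p_0=\infty$, the second Borel--Cantelli lemma yields that infinitely many occur almost surely. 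Finally I would check that ``i.o.\ along a subsequence of shifted, independent copies'' implies the ``i.o.'' statement as literally phrased—this is where the spatial homogeneity of the family $\{w_k\}$ (all increments are i.i.d., and the configuration looks the same after integer translation) is used to transfer the decoupled copies back to the original indexing, completing the proof.
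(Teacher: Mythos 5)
There is a genuine gap, and it lies in your decoupling step. The ``i.o.'' in the lemma ranges over the \emph{spatial} index $n$, while the time horizon $[0,T]$ is the same fixed interval in every event $E_n=\bigl\{\max_{k\le n}\max_{[0,T]}w_k\le n+\frac12\bigr\}\cap\bigl\{\min_{[0,T]}w_{n+1}>n+\frac12+\eps\bigr\}$. Your plan to realize ``mutually independent copies of the single-epoch event'' on disjoint time windows $[s_j,s_j+T]$ via the strong Markov property produces events about the increments after time $s_j$, started from the random, non-lattice configuration $(w_k(s_j))_k$; these are not occurrences of any $E_n$, and no appeal to spatial homogeneity can convert them into ones: shift-invariance in law of the family $\{w_k\}$ cannot turn an event on $[s_j,s_j+T]$ into the event on $[0,T]$ for the single realized family. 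So Borel--Cantelli II applied to those time-shifted copies proves a different statement, not the lemma. There is also a quantitative slip in your lower bound: ``the worst single-particle factor raised to the relevant power'' gives $q^{n+1}\to0$ as $n\to\infty$. What actually makes $\p(E_n)\ge c(T,\eps)>0$ uniformly in $n$ is that the buffers are $n+\frac12-k=j+\frac12$ with $j=n-k$, so the product is dominated below by the convergent infinite product $\prod_{j\ge0}\p\bigl\{\max_{s\in[0,T]}B(s)\le j+\frac12\bigr\}>0$, whose complementary factors are Gaussian tails and hence summable in $j$; your intuition about the growing buffers is right, but the justification as stated is not.

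The standard repair (note that the present paper does not prove this lemma but quotes it from \cite[Lemma~5]{Konarovskyi:TVP:2010}) replaces Borel--Cantelli II by a zero--one law in the spatial index. For each fixed $m$, the constraints in $E_n$ involving $w_0,\dots,w_m$ are a.s.\ satisfied for all large $n$, since $\max_{k\le m}\max_{[0,T]}w_k<\infty$ a.s.\ and the barrier $n+\frac12$ eventually exceeds it; hence $\limsup_n E_n$ coincides a.s.\ with an event measurable with respect to $\sigma(w_{m+1},w_{m+2},\dots)$ for every $m$, and by Kolmogorov's zero--one law its probability is $0$ or $1$. By reverse Fatou and the uniform bound above (which uses exactly the independence across $k$ that you identified), $\p(\limsup_n E_n)\ge\limsup_n\p(E_n)\ge c(T,\eps)>0$, so the probability is $1$. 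In short: your single-epoch lower bound is salvageable after the convergent-product correction, but the ``infinitely many chances'' structure must come from tail triviality across the spatial index, not from restarting in time.
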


Let $\Z_{\leq l}=\{l,l-1,\dots\}$ and $\Z_{\geq l}=\{ l,l+1,\dots \}$. Using the lemma above and the construction of $F_n$, $F_n^+$ and $F_n^-$, similarly to the proof of \cite[Theorem~2]{Konarovskyi:TVP:2010} we get the following statement.

\begin{proposition} 
  \label{pro_special_mmaf}
  Let $X^{l,n}_k$, $X^{l,n,+}_k$, $X^{l,n,-}_k$ be the stochastic processes constructed above. Then for every $l \in \Z$ the following statement holds.
  \begin{enumerate}
    \item [(i)] For every $k \in \Z$ the sequence $\{X^{l,n}_k,\ n\geq 1\}$ converges a.s. in the discrete topology of $\Cf[0,T]$ to a process $X^{l}_k$. Moreover, the family $X^l_k$, $k \in \Z$, is a modified massive Arratia flow started from $\Z$, i.e. it satisfies properties~\eqref{F1}-\eqref{F4}.

    \item [(ii)] For every $k \in \Z_{\geq l}$ the sequence $\{X^{l,n,+}_k,\ n\geq 1\}$ converges a.s. in the discrete topology of $\Cf[0,T]$ to a process $X^{l,+}_k$. Moreover, the family $X^{l,+}_k$, $k \in \Z_{\geq l}$,  satisfies properties~\eqref{F1}-\eqref{F4} with $\Z$ replaced by $\Z_{\geq l}$.

    \item [(iii)] For every $k \in \Z_{\leq  l}$ the sequence $\{X^{l,n,-}_k,\ n\geq 1\}$ converges a.s. in the discrete topology of $\Cf[0,T]$ to a process $X^{l,-}_k$. Moreover, the family $X^{l,-}_k$, $k \in \Z_{\leq l}$,  satisfies properties~\eqref{F1}-\eqref{F4} with $\Z$ replaced by $\Z_{\leq l}$.
  \end{enumerate}
\end{proposition}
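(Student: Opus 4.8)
The plan is to transfer the finite-particle existence argument of \cite[Theorem~2]{Konarovskyi:TVP:2010} to the present coupled setting and to read off the one-sided statements from the same construction. The decisive structural feature of the maps $F_n$, $F_n^+$, $F_n^-$ is a \emph{localization} (finite-dependence) property: trajectories are built only through collisions between index-adjacent particles, and at each coalescence the merged cluster continues from the collision point as a contracted copy of the motion of its minimal-absolute-value representative. Iterating the defining recursion, the value $f_k^{(p)}(t)$ of any coalesced particle is therefore a convex combination of values $f_j(s)$ of the driving functions at times $s\le t$. In particular, if a contiguous block of indices never meets the particles outside the block on $[0,T]$, then $F_n$ restricted to that block coincides with the construction applied to the block alone.

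First I would use Lemma~\ref{lem_existence_of_geps}, applied on the positive and on the negative half-line of driving Brownian motions, to produce (a.s.) infinitely many \emph{sealing levels}: indices $n$ for which $w_{l},\dots,w_{l+n}$ stay below $l+n+\tfrac12$ while $w_{l+n+1}$ stays above $l+n+\tfrac12+\eps$ on $[0,T]$, and symmetrically on the left. The convex-combination representation then upgrades this gap for the \emph{raw} motions to a gap for the \emph{coalesced} ones: every coalesced position of the inner block $\{l-n,\dots,l+n\}$ is a convex combination of values of $w_{l-n},\dots,w_{l+n}$, hence stays below $l+n+\tfrac12$; while the leftmost particle of the right outer block has minimal-absolute-value representative $l+n+1$, so by induction over its successive coalescences it follows a contracted copy of $w_{l+n+1}$ and stays above $l+n+\tfrac12+\eps$. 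A continuity (first-touching) argument then rules out any collision between the two blocks: on the interval before a hypothetical first inter-block meeting the blocks are decoupled, whence the two families are separated by $\eps>0$, contradicting the meeting. Thus the blocks decouple on all of $[0,T]$, and the inner trajectories of $F_N$ agree with those of $F_n$ for every $N\ge n$. Since there are infinitely many sealing levels, for each fixed $k$ the sequence $\{X^{l,n}_k,\ n\ge 1\}$ is eventually constant, i.e.\ convergent in the discrete topology of $\Cf[0,T]$; call the limit $X^l_k$.

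It remains to check that $(X^l_k)_{k\in\Z}$ satisfies \eqref{F1}--\eqref{F4}. Property \eqref{F2} is immediate from $w_k(0)=k$, and \eqref{F3} passes to the limit since each $F_n$ maps into $\Cf_{\le}$. For \eqref{F1} and \eqref{F4} I would argue on the stabilized configuration: on the event that $n$ is a sealing level beyond $k$, the process $X^l_k$ coincides with $X^{l,n}_k$, which by construction is a continuous square-integrable martingale whose quadratic variation accumulates at the reciprocal of the current cluster mass, i.e.\ $\langle X^l_k\rangle_t=\int_0^t \frac{ds}{m_k(s)}$; adaptedness and the martingale property with respect to $\F_t$ follow from the adaptedness of the coalescence times and of the representatives. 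The one-sided families $X^{l,+}_k$ and $X^{l,-}_k$ are handled identically, Lemma~\ref{lem_existence_of_geps} being applied only on the relevant half-line, yielding \eqref{F1}--\eqref{F4} with $\Z$ replaced by $\Z_{\ge l}$, respectively $\Z_{\le l}$.

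The main obstacle is precisely this middle step: passing from the gap between the \emph{raw} Brownian motions furnished by Lemma~\ref{lem_existence_of_geps} to a genuine gap between the \emph{coalesced} trajectories, so that inner and outer blocks provably never collide. This is where the convex-combination representation and the choice of the minimal-absolute-value representative are essential, together with the $\eps$-slack in Lemma~\ref{lem_existence_of_geps}, which keeps the two blocks strictly separated even after the mass rescaling; the verifications of \eqref{F1}--\eqref{F4} are then routine given stabilization.
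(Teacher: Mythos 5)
Your proposal is correct and takes essentially the same route as the paper, which establishes this proposition only by appeal to the argument of \cite[Theorem~2]{Konarovskyi:TVP:2010}: there, exactly as in your write-up, the almost surely infinitely many sealing gaps furnished by Lemma~\ref{lem_existence_of_geps} are combined with the ordering and convex-combination structure of $F_n$, $F_n^{\pm}$ to show that the finite systems stabilize a.s.\ (hence converge in the discrete topology of $\Cf[0,T]$), after which \eqref{F1}--\eqref{F4} are verified on the stabilized limits. The one step you compress---transferring the martingale property and the quadratic variation to the limit, which rests on the uniform bound $\langle X^{l,n}_k\rangle_t\le t$---is likewise routine and is handled in the same way in the cited proof.
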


We further define the events
\[
  A^+_{l,j}(t):=\left\{ \max\limits_{ k \in \{l,\dots,l+j\} } \max\limits_{ s \in [0,t] }w_{k}(s)\leq l+j+ \frac{1}{ 2 }, \quad \min\limits_{ s \in [0,t] }w_{l+j+1}(s)> l+j+ \frac{1}{ 2 } \right\}
\]
and 
\[
  A^-_{l,j}(t):=\left\{ \min\limits_{ k \in \{l-j,\dots,l\} } \min\limits_{ s \in [0,t] }w_{k}(s)\geq l-j- \frac{1}{ 2 }, \quad \max\limits_{ s \in [0,t] }w_{l-j-1}(s)< l-j- \frac{1}{ 2 } \right\}
\]
for all $l \in \Z$, $j \in \N_0$ and $t\in[0,T]$.

\begin{lemma} 
  \label{lem_gaps_in_mmaf}
  Let $l \in \Z$ and $j \in \N_0$ and let the families $\{X^{l}_k,\ k \in \Z\}$, $\{ X^{l,+}_k,\ k \in \Z_{\geq l} \}$, $\{ X^{l,-}_k,\ k \in \Z_{\leq l} \}$ be constructed above. Then $X^{l}_k=X^{l+p,+}_k$ on $A^{+}_{l,j}(T)$ for each $k> l+j$, $p \in \{ 0,\dots,j \}$, and $X^l_k=X^{l-p,-}_k$ on $A^{-}_{l,j}(T)$ for each $k<l-j$, $p \in \{ 0,\dots,j \}$.
\end{lemma}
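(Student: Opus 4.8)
The plan is to pass to the finite-particle constructions and to show that the gap encoded by $A^+_{l,j}(T)$ prevents any coalescence across the boundary between the indices $\le l+j$ and those $\ge l+j+1$, so that the trajectories of the right-hand particles are assembled from exactly the same data in the full and in the half-line system. By Proposition~\ref{pro_special_mmaf} the sequences $X^{l,n}_k$ and $X^{l+p,n,+}_k$ converge in the discrete topology and hence stabilise for large $n$; it therefore suffices to prove, for each sufficiently large $n$ and each $p\in\{0,\dots,j\}$, that on $A^+_{l,j}(T)$ one has $X^{l,n}_k=X^{l+p,n,+}_k$ for every $k>l+j$. Throughout I would use that the maps $F_n$ and $F_n^+$ produce ordered trajectories, i.e. $f^{(q)}_{k}(t)\le f^{(q)}_{k+1}(t)$ for all $t\in[0,T]$ and all steps $q$, a relation that is preserved at every coalescence step of the construction.

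The key step is to show that on $A^+_{l,j}(T)$ the particles with index $l+j$ and $l+j+1$ never meet before time $T$ in either construction. I would prove this by running through the coalescence times $\tau^{(1)}<\tau^{(2)}<\dots$ and checking inductively that, as long as no boundary crossing has occurred, every cluster built from indices in $\{l+j+1,l+j+2,\dots\}$ stays strictly above the level $l+j+\tfrac{1}{2}$, while every cluster built from indices $\le l+j$ stays at or below $l+j+\tfrac{1}{2}$. For a pure right cluster the minimal-absolute-value representative is $l+j+1$ (once the block lies in $\Z_{\ge 0}$), and since on $A^+_{l,j}(T)$ the driving motion satisfies $\min_{s\le T}w_{l+j+1}(s)>l+j+\tfrac{1}{2}$, the convex-combination form of the rescaling in the definition of $F_n$ keeps the cluster position above $l+j+\tfrac{1}{2}$; the upper bound $\max_{k\in\{l,\dots,l+j\}}\max_{s\le T}w_k(s)\le l+j+\tfrac{1}{2}$ controls the right-most left cluster in the same fashion, and the order relation then forbids the two families of clusters from ever colliding. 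This position control near the boundary is the main obstacle, because a cluster is driven only by the Brownian motion of its \emph{current} representative, and one must guarantee that this representative is one of the motions actually constrained by $A^+_{l,j}(T)$; this is precisely the point handled by the construction underlying \cite[Theorem~2]{Konarovskyi:TVP:2010}, which is why the whole argument is carried out at the level of $F_n$.

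Granting the absence of a boundary crossing, the clusters made up of indices $>l+j$ have, at each coalescence time, identical constituents in the full system $F_n(w_{l-n},\dots,w_{l+n})$ and in the half-line system $F_n^+(w_{l+p},\dots,w_{l+p+n})$, since $l+p\le l+j$ guarantees that all of $w_{l+j+1},w_{l+j+2},\dots$ are present in both. Equal constituents yield the same minimal-absolute-value representative and the same cluster mass at every step, and the driving Brownian motions agree, so the recursive definitions of $F_n$ and $F_n^+$ force $X^{l,n}_k=X^{l+p,n,+}_k$ for every $k>l+j$. Letting $n\to\infty$ and invoking the discrete-topology convergence of Proposition~\ref{pro_special_mmaf} gives $X^{l}_k=X^{l+p,+}_k$ on $A^+_{l,j}(T)$ for all $k>l+j$. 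The statement on $A^-_{l,j}(T)$ then follows by the symmetric argument, reflecting the indices about $l$ so that $F_n^+$ is replaced by $F_n^-$ and $A^+_{l,j}$ by $A^-_{l,j}$.
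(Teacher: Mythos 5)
Your overall plan is exactly what the paper's one-sentence proof (``directly follows from the construction'') gestures at, and the two core mechanisms you identify are correct: because the minimal-absolute-value representative of a merged cluster is always the representative of the sub-cluster whose trajectory is continued, the cluster containing $l+j+1$ is, as long as it consists only of indices $\geq l+j+1$, driven by $w_{l+j+1}$ alone, and an Abel summation over its (non-decreasing) mass epochs writes its position as a convex combination of past values of $w_{l+j+1}$, hence it stays strictly above $l+j+\frac{1}{2}$ on $A^+_{l,j}(T)$; symmetrically, the cluster containing $l+j$ always has its current representative in $\{l,\dots,l+j\}$ (clusters are intervals of indices), so the same convex-combination identity keeps it at or below $l+j+\frac{1}{2}$, and the preserved ordering plus continuity at a hypothetical first meeting time rules out any crossing of the gap in either system. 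One small imprecision: the representative of a generic pure right cluster is its \emph{leftmost} element, not $l+j+1$; only the boundary cluster containing $l+j+1$ has representative $l+j+1$, and the other right clusters are handled by the order relation alone, which is what matters.

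There is, however, a genuine flaw in your finite-$n$ matching step. You compare $F_n(w_{l-n},\dots,w_{l+n})$ with $F_n^+(w_{l+p},\dots,w_{l+p+n})$ and claim the right-block clusters have identical constituents because ``all of $w_{l+j+1},w_{l+j+2},\dots$ are present in both''. That is true only for the infinite systems: at fixed $n$ the right blocks are $\{l+j+1,\dots,l+n\}$ and $\{l+j+1,\dots,l+p+n\}$, so for $p\geq 1$ the half-line system contains the extra particles $l+n+1,\dots,l+p+n$, which $A^+_{l,j}(T)$ does not constrain in any way. With positive probability on $A^+_{l,j}(T)$ these extra particles coalesce into the right block and their influence propagates down to a fixed $k>l+j$ before time $T$, so $X^{l,n}_k\neq X^{l+p,n,+}_k$ can occur for every fixed $n$; thus your constituents argument does not prove the finite-$n$ identity you reduced to, and recovering it ``for all sufficiently large random $n$'' from the stabilization in Proposition~\ref{pro_special_mmaf} would be circular, since equality of the limits is what is being proved. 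The repair is straightforward: match the right truncations, i.e.\ compare $F_n(w_{l-n},\dots,w_{l+n})$ with $F_{n-p}^+(w_{l+p},\dots,w_{l+n})$, for which your identification of constituents, representatives, masses and merge times is correct verbatim once the no-crossing property is established in both systems; then let $n\to\infty$ and use the a.s.\ discrete-topology convergence of $X^{l,n}_k$ and of $X^{l+p,m,+}_k$ with $m=n-p$ (Proposition~\ref{pro_special_mmaf}) to pass the identity to the limit processes on $A^+_{l,j}(T)$. The statement on $A^-_{l,j}(T)$ then follows by your symmetry remark.
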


\begin{proof} %
  The statement of the lemma directly follows from the construction of the families of random processes $X^l_k$, $X^{l,+}_k$, $X^{l,-}_k$ and the events $A^+_{l,j}(T)$, $A^-_{l,j}(T)$.
\end{proof}

We denote 
\[
  B^{+}_{l,N}(t)= \bigcup_{ j=1 }^{ N } A^+_{l,j}(t), \quad B^-_{l,N}(t)=\bigcup_{ j=1 }^{ N } A^{-}_{l,j}(t), \quad l \in \Z,\ \ N \in \N,\ \ t\in[0,T].
\]

\begin{lemma} 
  \label{lem_estimate_of_appearing_gaps_in_intervals}
  For each $T>0$ there exist a constant $C=C_T>0$ and a function $\beta_T(t):(0,T]\to(0,\infty)$ depending only on $T$ such that $t\beta_T(t)\to \frac{1}{8\sqrt{2}}$ as $t\to 0+$ and for every $l\in \Z$ and $N\in \N$  
  \[
    \p\left( B_{l,N}^{\pm}(t) \right)\geq 1-C e^{-\beta_T(t) \left[(\sqrt{ N }-\sqrt{2})\vee 1\right]}
  \]
  for all $l \in \Z$, $N \in \N$ and $t\in[0,T]$.
\end{lemma}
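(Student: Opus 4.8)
The plan is to reduce the statement to a single case and then manufacture, inside a window of $N$ consecutive indices, roughly $\sqrt{N/2}$ genuinely independent ``gap trials''. First I would reduce to estimating $\p(B^+_{0,N}(t))$ alone. Since the processes $w_k-k$, $k\in\Z$, are i.i.d., the law of $B^+_{l,N}(t)$ does not depend on $l$, and $B^-_{l,N}(t)$ is the mirror image of $B^+_{l,N}(t)$ under $x\mapsto -x$, so both $\pm$ cases follow from the bound for $B^+_{0,N}(t)$. The only input about the Brownian motions I would use is the reflection principle: for $a>0$ one has $\p\{\max_{s\le t}(w_k(s)-k)>a\}=2\p\{w_k(t)-k>a\}$, of order $e^{-a^2/(2t)}$, and likewise for the running minimum. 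In particular, writing $\rho_t=\p\{\max_{s\le t}w_0(s)\le \tfrac12\}$ and noting that the infinite product $\prod_{i\ge 0}\p\{\max_{s\le t}w_0(s)\le i+\tfrac12\}$ converges to a strictly positive limit $p_\infty(t)$ (its tail is summable by the Gaussian estimate), every single-level gap probability is bounded below by a positive constant uniformly in the level.

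The core construction: fix a spacing $s\approx\sqrt{2N}$ and trial levels $j_1<\dots<j_m$ in $\{1,\dots,N\}$ with $j_r-j_{r-1}=s$, so that $m\approx\sqrt{N/2}$. For each $r$ I decompose $A^+_{0,j_r}(t)=\mathcal F_r\cap\mathcal N_r\cap\mathcal L_r$, where $\mathcal L_r=\{\min_{s\le t}w_{j_r+1}(s)>j_r+\tfrac12\}$ is the right-hand barrier, $\mathcal N_r$ asks that the ``near'' particles $w_k$ with $j_{r-1}+1<k\le j_r$ stay $\le j_r+\tfrac12$, and $\mathcal F_r$ asks the same of the ``far'' particles $w_k$ with $k\le j_{r-1}+1$. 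Choosing the near/far split point so that consecutive blocks use disjoint index sets (block $r$ uses indices $j_{r-1}+2,\dots,j_r+1$), the events $\mathcal N_r\cap\mathcal L_r$, $r=1,\dots,m$, depend on disjoint families of Brownian motions and are therefore independent, each of probability $\ge p_\infty(t)\rho_t=:c(t)>0$. Hence $\p\!\big(\bigcup_r(\mathcal N_r\cap\mathcal L_r)\big)\ge 1-(1-c(t))^m$.

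It remains to discard the far barriers. Since $A^+_{0,j_r}(t)\supseteq(\mathcal N_r\cap\mathcal L_r)\cap\mathcal F_r$, the elementary inclusion $\bigcup_r(\mathcal N_r\cap\mathcal L_r\cap\mathcal F_r)\supseteq[\bigcup_r(\mathcal N_r\cap\mathcal L_r)]\setminus[\bigcup_r\mathcal F_r^{\,c}]$ gives
\[
  \p(B^+_{0,N}(t))\ge 1-(1-c(t))^m-\sum_{r=1}^m\p(\mathcal F_r^{\,c}).
\]
By the reflection principle $\p(\mathcal F_r^{\,c})\le\sum_{k\le j_{r-1}+1}2\,\p\{w_0(t)>j_r-k+\tfrac12\}\lesssim e^{-(s-1)^2/(2t)}$, so with $s\approx\sqrt{2N}$ the whole far-left sum is of order $m\,e^{-N/t}$, negligible next to $(1-c(t))^m$. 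Taking $\beta_T(t)=-\tfrac1{\sqrt2}\log(1-c(t))$ and using $m\ge \sqrt{N/2}-1=(\sqrt N-\sqrt2)/\sqrt2$ yields the bound with the stated factor $(\sqrt N-\sqrt2)\vee 1$ (the $\vee 1$ and the constant $C_T$ absorbing small $N$ and rounding); the Gaussian asymptotics $1-c(t)\sim \mathrm{const}\cdot\sqrt t\,e^{-1/(8t)}$ then give $t\beta_T(t)\to \tfrac1{8\sqrt2}$ as $t\to0+$.

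The main obstacle is exactly the long-range left-containment constraint ``$w_k\le j+\tfrac12$ for all $k\le j$'': it couples the single-level gap events $A^+_{0,j}(t)$ through the low-index particles, so one cannot simply run an independence argument across all $N$ levels. Splitting each containment into a near part (retained, and made independent across widely separated blocks) and a far part (discarded at negligible cost thanks to the Gaussian tail over a distance $\approx s$) is what resolves this, and the balance ``keep $s^2/t$ large while retaining $m\approx N/s$ trials'' is precisely what forces the square-root rate $\sqrt N$ and produces the constant $\tfrac1{8\sqrt2}$.
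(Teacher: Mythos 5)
Your proposal is correct and is essentially the paper's own argument: reduce to $B^+_{0,N}(t)$, manufacture $m\approx\sqrt{N/2}$ independent near-block gap trials (near containment plus the right-hand barrier, on pairwise disjoint index sets), remove the long-range containment constraint by a Gaussian-tail union bound over the far particles, and conclude from $(1-c(t))^m$ together with $1-c(t)\sim \mathrm{const}\cdot\sqrt{t}\,e^{-1/(8t)}$, which yields $t\beta_T(t)\to\frac{1}{8\sqrt{2}}$. The only cosmetic deviations are your equal spacing $s\approx\sqrt{2N}$ in place of the paper's increasing block sizes $N_k=p+k$ (summing to $N=\frac{(3p+1)p}{2}$), and your far event $\mathcal{F}_r$ (far particles stay below the level) in place of the paper's event $M_{j,n}$ (the near-block maximum dominates the global maximum); both are discarded by the same Gaussian estimate, so the rate $\sqrt{N}$ and the constant $\frac{1}{8\sqrt{2}}$ come out identically.
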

\begin{proof} %
  Note that $\p\left( B_{l,N}^+(t) \right)=\p\left( B_{l,N}^-(t) \right)=\p\left( B_{0,N}^+(t) \right)$. Therefore, it is enough to estimate $\p\left( B_{0,N}^+(t) \right)$ for each $N \in \N$. We denote 
  \[
    M_{j,n}(t)=\left\{ \max\limits_{ k \in \{ 0,\dots,j \} } \max\limits_{ s \in [0,t] }w_k(s)=\max\limits_{ k \in \{ j-n+2,\dots,j \} }\max\limits_{ s \in [0,t] }w_k(s) \right\}
  \]
  and 
  \[
    R_{j,n}(t)=\left\{ \max\limits_{ k \in \{ j-n+2,\dots,j \} }\max\limits_{ s \in [0,t] }w_k(s)\leq j+ \frac{1}{ 2 }, \min\limits_{ s \in [0,t] }w_{j+1}(s)> j+ \frac{1}{ 2 } \right\}
  \]
  for all $2\leq n\leq j$ and $t\in[0,T]$. Then, by Funibi's theorem,
  \begin{equation} 
  \label{equ_estimate_of_mc}
   \begin{split}
    \p\left( M_{j,n}(t)^c \right)&\leq \p\left\{ \max\limits_{ k \in \{ 0,\dots,j-n+1 \} }\max\limits_{ s \in [0,t] }w_k(s)> j \right\}\le\sum_{ k=0 }^{ j-n+1 } \p\left\{ \max\limits_{ s \in [0,t] }w_k(s)> j \right\}\\ 
    &\le\sum_{ k=0 }^{ j-n+1 }\frac{ \sqrt{ 2 } }{ \sqrt{ \pi t} }\int_{ j-k }^{ +\infty } e^{- \frac{ u^2 }{ 2t }}du
    \le\sum_{ k=n-1 }^{ \infty }\frac{ \sqrt{ 2 } }{ \sqrt{ \pi t} }\int_{ k }^{ +\infty } e^{- \frac{ u^2 }{ 2t }}du\\
    &\leq \frac{ \sqrt{ 2 } }{ \sqrt{ \pi t} }  \int_{ n-2 }^{ +\infty } \left(\int_{ x }^{ +\infty } e^{- \frac{ u^2 }{ 2t }}du\right) dx \\
    &\leq \frac{ \sqrt{ 2 } }{ \sqrt{ \pi t} }  \int_{ n-2 }^{ +\infty } u e^{- \frac{ u^2 }{ 2t }}du
    \le \frac{\sqrt {2t}}{\sqrt{\pi}}e^{-\frac{ (n-2)^2 }{ 2t }}.
    \end{split}
  \end{equation}
 
  Using the independence of $w_k$, $k\in\Z$, we obtain
  \begin{equation} 
  \label{equ_estimate_of_r}
    \begin{split}
      \p\left( R_{j,n}(t) \right)&=  \frac{\sqrt 2}{\sqrt{\pi t}}\int^{ \frac{1}{ 2 } }_{ 0 } e^{-\frac{ u^2 }{ 2t }}du \prod_{ k=j-n+2 }^{ j } \frac{ \sqrt{ 2 } }{ \sqrt{ \pi t } }\int^{ j+ \frac{1}{ 2 }-k }_{ 0 } e^{-\frac{ u^2 }{ 2t }}du  \\
      &= \left(1-\frac{\sqrt 2}{\sqrt{\pi t}}\int_{ \frac{1}{ 2 } }^{ \infty } e^{-\frac{ u^2 }{ 2t }}du  \right)\prod_{ k=0 }^{ n-2 } \left( 1-\frac{ \sqrt{ 2 } }{ \sqrt{ \pi t } } \int_{ k+ \frac{1}{ 2 } }^{ +\infty } e^{-\frac{ u^2 }{ 2t }}du  \right)
    \end{split}
  \end{equation}
  for all $2\leq n\leq j$ and $t\in[0,T]$. Taking a constant $a_T>0$ such that $\ln(1-x)\ge -a_Tx$ for all $0\le x\le \frac{\sqrt 2}{\sqrt{\pi T}}\int_{ \frac{1}{ 2 } }^{ \infty } e^{-\frac{ u^2 }{ 2T }}du=\frac{\sqrt 2}{\sqrt{\pi}}\int_{ \frac{1}{ 2T } }^{ \infty } e^{-\frac{ u^2 }{ 2 }}du$, we next estimate
  \begin{align*}
      \ln  \p\left( R_{j,n}(t) \right)&=\ln\left(1-\frac{\sqrt 2}{\sqrt{\pi t}}\int_{ \frac{1}{ 2 } }^{ \infty } e^{-\frac{ u^2 }{ 2t }}du  \right)+\sum_{ k=0 }^{ n-2 } \ln \left( 1-\frac{ \sqrt{ 2 } }{ \sqrt{ \pi t } } \int_{ k+ \frac{1}{ 2 } }^{ +\infty } e^{-\frac{ u^2 }{ 2t }}du  \right)\\
      &\ge-\frac{\sqrt 2a_T}{\sqrt{\pi t}}\int_{ \frac{1}{ 2 } }^{ \infty } e^{-\frac{ u^2 }{ 2t }}du-\sum_{ k=0 }^{ \infty } \frac{ \sqrt{ 2 }a_T }{ \sqrt{ \pi t } } \int_{ k+ \frac{1}{ 2 } }^{ +\infty } e^{-\frac{ u^2 }{ 2t }}du\\
      &\ge -\frac{2\sqrt 2a_T}{\sqrt{\pi t}}\int_{ \frac{1}{ 2 } }^{ \infty } e^{-\frac{ u^2 }{ 2t }}du-\frac{a_T \sqrt{ 2 } }{ \sqrt{ \pi t } } \int_{\frac{1}{2}}^\infty \int_{ x }^{ +\infty } e^{-\frac{ u^2 }{ 2t }}dudx\ge -\tilde{C}_T e^{-\frac{1}{8t}},
  \end{align*}
  where $\tilde C_T:=\frac{10\sqrt {2T}a_T}{\sqrt{\pi}}$.
  Thus, for each $2\le n\le j$ and $t\in[0,T]$, we get
  \[
    \p\left( R_{j,n}(t) \right)\ge e^{-\tilde C_T e^{-1/8t}}.
  \]

  Fix $p \in \Z_{\geq 3}$ and set $N_k=p+k$, $k \in \{ 1,\dots,p \}$,
  \[
    G_k(t)=R_{\tilde{N}_k,N_k}(t) \cap M_{\tilde{N}_k,N_k}(t),
  \]
  where $\tilde{N}_k=\sum_{ i=1 }^{ k } N_i$. Since $ \bigcup_{ k=1 }^{ p } G_k(t) \subseteq B_{0,N}^+(t)$ for $N=\tilde{N}_p=\frac{ (3p+1)p }{ 2 }$, one can estimate
  \begin{align*}
  \p\left( B_{0,N}^+(t) \right)&\geq \p\left( \bigcup_{ k=1 }^{ p } G_k(t) \right)= \p\left( \bigcup_{ k=1 }^{ p } \left(R_{\tilde{N}_k,N_k}(t) \setminus M_{\tilde{N}_k,N_k}(t)^c \right) \right)\\
  &\geq \p\left( \left(\bigcup_{ k=1 }^{ p } R_{\tilde{N}_k,N_k}(t)\right) \setminus\left(\bigcup_{ k=1 }^{ p }  M_{\tilde{N}_k,N_k}(t)^c \right) \right)\\
  &\geq \p\left( \bigcup_{ k=1 }^{ p } R_{\tilde{N}_k,N_k}(t)\right)-\p\left(\bigcup_{ k=1 }^{ p }  M_{\tilde{N}_k,N_k}(t)^c \right)\\
  &\geq  1-\p\left( \bigcap_{ k=1 }^{ p } R_{\tilde{N}_k,N_k}(t)^c\right)- C_T\sum_{ k=1 }^{ p } e^{- \frac{ (N_k-2)^2 }{ 2t }},
  \end{align*}
  where we used~\eqref{equ_estimate_of_mc} in the last step. Note that  
  \begin{equation} 
  \label{equ_estimate_of_sum}
  \sum_{ k=1 }^{ p } e^{-\frac{ (N_k-2)^2 }{ 2t }}=\sum_{ k=1 }^{ p } e^{-\frac{ (p+k-2)^2 }{ 2t }}\leq \int_{ p-2 }^{ +\infty } e^{-\frac{ u^2 }{ 2t }}du\leq 2Te^{-\frac{ (p-2)^2 }{ 2t }}, 
  \end{equation}
  since $p\geq 3$.  Set $\gamma_T:=-\ln\left(1-e^{-\tilde{C}_T e^{-1/8T}}\right)>0$. Using the inequality $1-e^{-x}\le x$ for $x\ge 0$, we get
  \[
  1-e^{-\tilde{C}_T e^{-1/8T}}\le \tilde{C}_Te^{-\frac{1}{8T}}=e^{\ln \tilde C_T -\frac{1}{8T}}
  \]
  Hence, by the independence of $R_{\tilde{N}_k,N_k}$, $k \in \{ 1,\dots,p \}$, and estimate~\eqref{equ_estimate_of_sum}, there exists a constant $C>0$ depending on $T$ such that
    \begin{align*}
    \p\left( B_{0,N}^+(t) \right)&\geq 1-\prod_{ k=1 }^{ p } \p\left( R_{\tilde{N_k},N_k}(t)^c \right)-C e^{-\frac{ (p-2)^2 }{ 2t }}.
  \end{align*}
  Using the  observation~\eqref{equ_estimate_of_r} and the inequality $1-e^{-x}\le x$ for $x\ge 0$, we continue  
  \begin{align*}
    \p\left( B_{0,N}^+(t) \right)&\geq 1-(1-e^{-\tilde{C}_T e^{-1/8t}})^p-Ce^{-\frac{ (p-2)^2 }{ 2t }}\\
    &\geq 1-e^{-\left[\left(\frac{1}{8t}-\ln \tilde C_T\right) \vee\gamma_T\right]p}-Ce^{-\frac{ (p-2)^2 }{ 2t }} \ge 1-C_Te^{-\beta_T(t)p}
  \end{align*}
  for all $p \in \Z_{\geq 3}$ and $N=\frac{ (3p+1)p }{ 2 }$, where  $\beta_T(t)=\left[\left(\frac{1}{8t}-\ln \tilde C_T\right)\vee\gamma_T\right]\wedge \frac{1}{t}$ and the constant $C_T$ depends only on $T$.
  Next, for every $N \in \Z_{\geq 18}$ and $p=\big\lfloor \sqrt{ N/2 }\big\rfloor $ we have $N\geq \frac{ (3p+1)p }{ 2 }$. Hence,  
  \begin{align*}
    \p\left( B_{0,N}^+(t) \right)&\geq \p\left( B_{0,\frac{ (3p+1)p }{ 2 }}^+(t) \right)\geq 1-C_Te^{-\beta_T(t) p}=1-C_Te^{-\beta_T(t) \left\lfloor \sqrt{ \frac{ N }{ 2 } } \right\rfloor}\\
    &\geq 1-C_T e^{-\beta_T(t) \left( \frac{ \sqrt{ N } }{ \sqrt{ 2 } }-1 \right)}
  \end{align*}
  for every $N\geq 18$.

  It only remains to get the estimate for $N<18$.  We first note that 
  \begin{align*}
  \p\left(A_{l,j}^+(t)^c\right)&\le(j+2)\p\left\{\max_{s\in[0,t]}w_0(s)\ge \frac{1}{2}\right\}\\
  &\le  \frac{\sqrt 2 (j+2)}{\sqrt{\pi t}}\int_{ \frac{1}{ 2 } }^{ \infty } e^{-\frac{ u^2 }{ 2t }}du\le \frac{4\sqrt {2T} (j+2)}{\sqrt{\pi}}e^{-\frac{ 1 }{ 8t }}.
  \end{align*}
  Thus, for all integer numbers $N<18$
  \begin{align*}
      \p\left( B_{0,N}^+(t)^c \right)\ge 1-\sum_{j=1}^N\frac{4\sqrt {2T} (j+2)}{\sqrt{\pi}}e^{-\frac{ 1 }{ 8t }}\ge 1- C_T e^{-\frac{1}{8t}}.
  \end{align*}
  This completes the proof of the lemma.
\end{proof}

\begin{proof}[Proof of Proposition~\ref{pro_mixing_property}] %
  Let $X_k^{l}$, $k \in \Z$, $X_k^{l,+}$, $k \in \Z_{\geq l}$ and $X_k^{l,-}$, $k \in \Z_{\leq l}$, be the families of processes constructed in Proposition~\ref{pro_special_mmaf} for each $l \in \Z$. We fix $i<j$ from $\Z$, $t>0$ and define for $l=\left\lfloor \frac{ i+j }{ 2 } \right\rfloor$ the measures
  \[
    \mu_t^l(A)=\#\left( A \cap \{ X_k^l(t),\ k \in \Z \} \right), \quad
  \]
  and 
  \[
    \mu_t^{l,+}(A)=\#\left( A \cap \{ X_k^{l,+},\ k \in \Z_{\geq l} \} \right), \quad \mu_t^{l,-}(A)=\#\left( A \cap \{ X_k^{l,-},\ k \in \Z_{\leq l} \} \right),
  \]
  for all $A \in \B(\R )$. Set also 
  \[
    A_{k,t}^lf= \int_{ k-1 }^{ k } f(u)\mu_t^l(du) \quad \mbox{and} \quad A_{k,t}^{\pm}f=\int_{ k-1 }^{ k } f(u)\mu_t^{l,\pm}(du), \quad k \in \Z. 
  \]
  By Proposition~\ref{pro_special_mmaf}, the distributions of $(A_{k,t}^lf)_{k \in \Z}$ and $(A_{k,t}f)_{k \in \Z}$ coincide in $\R^{\Z}$.

  We next assume that $j-i\geq 3$ and take arbitrary sets $A \in \fM_{-\infty}^i$, $B \in \fM_{j}^{+\infty}$. Then there exist Borel measurable sets $\tilde{A} \subseteq \R^{\Z_{\leq i}}$ and $\tilde{B} \subseteq \R^{\Z_{\geq j}}$ such that 
  \[
    A=\{ (A_{k,t}f)_{k \in \Z_{\leq i}} \in \tilde{A} \} \quad \mbox{and} \quad B=\{ (A_{k,t}f)_{k \in \Z_{\geq j}} \in \tilde{B} \}.
  \]
  Then, using Lemma~\ref{lem_estimate_of_appearing_gaps_in_intervals}, we can estimate 
  \begin{align*}
    &|\p\left( A \cap B \right)-\p(A)\p(B)|= \Big|\p\left( \left\{ (A_{k,t}f)_{k \in \Z_{\leq i}} \in \tilde{A} \right\} \cap \left\{ (A_{k,t}f)_{k \in \Z_{\geq j}} \in \tilde{B}\right\} \right)\\
    &\qquad\qquad-\p\left\{ (A_{k,t}f)_{k \in \Z_{\leq i}} \in \tilde{A} \right\}  \p\left\{ (A_{k,t}f)_{k \in \Z_{\geq j}} \in \tilde{B}\right\}\Big|\\
    &= \Big|\p\left( \left\{ (A_{k,t}^lf)_{k \in \Z_{\leq i}} \in \tilde{A} \right\} \cap \left\{ (A_{k,t}^lf)_{k \in \Z_{\geq j}} \in \tilde{B}\right\} \right)\\
    &\qquad\qquad-\p\left\{ (A_{k,t}^lf)_{k \in \Z_{\leq i}} \in \tilde{A} \right\}  \p\left\{ (A_{k,t}^lf)_{k \in \Z_{\geq j}} \in \tilde{B}\right\}\Big|\\
    &\leq  \Big|\p\left( \left\{ (A_{k,t}^lf)_{k \in \Z_{\leq i}} \in \tilde{A} \right\} \cap \left\{ (A_{k,t}^lf)_{k \in \Z_{\geq j}} \in \tilde{B}\right\} \cap B^+_{l,j-l} \cap B^-_{l,l-i} \right)\\
    &\qquad\qquad-\p\left(\left\{ (A_{k,t}^lf)_{k \in \Z_{\leq i}} \in \tilde{A} \right\} \cap B^-_{l,l-i}\right)  \p\left(\left\{ (A_{k,t}^lf)_{k \in \Z_{\geq j}} \in \tilde{B}\right\} \cap B_{l,j-l}^+\right)\Big|\\
    &+ Ce^{-\beta \sqrt{ j-i }}
  \end{align*}
  for some positive constants $C$, $\beta$ independent on $i,j$ and $A$, $B$.  By Lemmas~\ref{lem_gaps_in_mmaf} and~\ref{lem_estimate_of_appearing_gaps_in_intervals}, we get 
  \begin{align*}
    &|\p(A \cap B)-\p(A)\p(B)|\\
    &\leq \Big|\p\left( \left\{ (A_{k,t}^{l,-}f)_{k \in \Z_{\leq i}} \in \tilde{A} \right\} \cap \left\{ (A_{k,t}^{l+1,+}f)_{k \in \Z_{\geq j}} \in \tilde{B}\right\} \cap B^+_{l,j-l} \cap B^-_{l,l-i} \right)\\
    &\qquad\qquad-\p\left(\left\{ (A_{k,t}^{l,-}f)_{k \in \Z_{\leq i}} \in \tilde{A} \right\} \cap B^-_{l,l-i}\right)  \p\left(\left\{ (A_{k,t}^{l+1,+}f)_{k \in \Z_{\geq j}} \in \tilde{B}\right\} \cap B_{l,j-l}^+\right)\Big|\\
    &+ Ce^{-\beta \sqrt{ j-i }}\\
    &\leq \Big|\p\left( \left\{ (A_{k,t}^{l,-}f)_{k \in \Z_{\leq i}} \in \tilde{A} \right\} \cap \left\{ (A_{k,t}^{l+1,+}f)_{k \in \Z_{\geq j}} \in \tilde{B}\right\} \right)\\
    &\qquad\qquad-\p\left\{ (A_{k,t}^{l,-}f)_{k \in \Z_{\leq i}} \in \tilde{A} \right\}  \p\left\{ (A_{k,t}^{l+1,+}f)_{k \in \Z_{\geq j}} \in \tilde{B}\right\}\Big|\\
    &+ C_1e^{-\beta \sqrt{ j-i }},
  \end{align*}
  where $C_1>0$ is a constant independent on $i,j$ and $A$, $B$. Hence, using independence of $(A_{k,t}^{l,-})_{k \in \Z\leq i}$ and $(A_{k,l}^{l+1,+})_{k \in \Z\geq j}$, we can conclude that
  \[
    |\p(A \cap B)-\p(A)\p(B)|\leq C_1e^{-\beta \sqrt{ j-i }}.
  \]
  Now, taking the supremum over $A \in \fM_{-\infty}^i$ and $B \in \fM_j^{+\infty}$, we obtain the statement of the proposition.
\end{proof}

\section{Proof of Theorem~\ref{the_clt_for_mmaf}}%
\label{sub:proof_of_theorem_the_clt_for_mmaf}

In this section, we will prove Theorem~\ref{the_clt_for_mmaf}. According to \cite[Theorem~18.5.3]{Ibragimov:1971} and Proposition~\ref{pro_mixing_property}, the statement of Theorem~\ref{the_clt_for_mmaf} follows from the fact that $\E{(A_{k,t}f)^{2+\delta}}<\infty$ for some $\delta>0$. We will show that $A_{k,t}f$ have finite moments of all orders.

\begin{proposition} 
  \label{prop_finiteness_of_all_moments_for_bounded_functions}
  Let $f: \R \to \R $ be a bounded measurable function. Then for every $a<b$ from $\R $, $T>0$ and $p\geq 1$ there exists a constant $C>0$ such that
  \[
    \E{ \left|\int_{ a }^{ b } f(u)\mu_t(du)\right|^p}<C  
  \]
  for all $t\in[0,T]$.
\end{proposition}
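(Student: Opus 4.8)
The plan is to reduce the claim to a moment bound for the number of occupied positions in $[a,b]$ and then to control that count through the Gaussian tails of the individual trajectories. Writing $M=\sup_{u}|f(u)|<\infty$ and recalling that $\mu_t$ is supported on the occupied positions $\{x_k(t):k\in\Z\}$ with unit weights, we have
\[
  \left|\int_a^b f(u)\mu_t(du)\right|\le M\,\mu_t([a,b])\le M\,\#\{k\in\Z:\ x_k(t)\in[a,b]\}.
\]
Hence it suffices to bound $\E{(\#\{k:\ x_k(t)\in[a,b]\})^p}$ uniformly in $t\in[0,T]$. Splitting the indices according to whether $k<a$, $a\le k\le b$ or $k>b$, and using the ordering \eqref{F3}, the number of integers in $[a,b]$ is at most $b-a+1$, so by $(x+y+z)^p\le 3^{p-1}(x^p+y^p+z^p)$ it is enough to bound the $p$-th moments of
\[
  N_+:=\#\{k>b:\ x_k(t)\le b\}, \qquad N_-:=\#\{k<a:\ x_k(t)\ge a\}.
\]

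The key tool is a uniform Gaussian tail estimate for each trajectory. By \eqref{F1} the process $x_k-k$ is a continuous square-integrable martingale started at $0$, and by \eqref{F4} its quadratic variation satisfies $\langle x_k\rangle_t=\int_0^t \frac{ds}{m_k(s)}\le t\le T$, since $m_k(s)\ge 1$. Applying the exponential supermartingale $\exp\!\big(\theta(x_k(s)-k)-\tfrac{\theta^2}{2}\langle x_k\rangle_s\big)$ together with optional stopping at the first time the trajectory reaches a level $\lambda>0$, and optimizing over $\theta$, I obtain
\[
  \p\Big\{\sup_{s\le t}\big(k-x_k(s)\big)\ge \lambda\Big\}\le e^{-\lambda^2/(2T)}, \qquad \p\Big\{\sup_{s\le t}\big(x_k(s)-k\big)\ge \lambda\Big\}\le e^{-\lambda^2/(2T)},
\]
valid for all $t\in[0,T]$; the point is that $\langle x_k\rangle_\tau\le T$ at the relevant stopping time makes the bound independent of $t$.

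Next I would exploit the monotonicity \eqref{F3}. Enumerating the indices strictly greater than $b$ as $k_1<k_2<\dots$, so that $k_m-b\ge m-1$, the ordering gives the identity of events $\{N_+\ge m\}=\{x_{k_m}(t)\le b\}$, whence
\[
  \p\{N_+\ge m\}=\p\{k_m-x_{k_m}(t)\ge k_m-b\}\le e^{-(m-1)^2/(2T)},
\]
and symmetrically for $N_-$. Summation by parts then yields
\[
  \E{N_+^p}=\sum_{m\ge 1}\big(m^p-(m-1)^p\big)\,\p\{N_+\ge m\}\le \sum_{m\ge 1} p\,m^{p-1}e^{-(m-1)^2/(2T)}=:C_{p,T}<\infty,
\]
a finite constant depending only on $p$ and $T$ and independent of $t\in[0,T]$ and of the location of $[a,b]$. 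Combining the three contributions produces the asserted bound with $C=M^p\,3^{p-1}\big((b-a+1)^p+2C_{p,T}\big)$.

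The main obstacle is the uniform-in-$t$ exponential tail estimate for the single particles: one must verify that the martingale bound, coupled with $\langle x_k\rangle_t\le t\le T$, holds uniformly on $[0,T]$, and then convert the counts $N_\pm$ into sums of one-particle tail probabilities via the ordering \eqref{F3}. Once these two points are secured, the remaining steps are routine summation.
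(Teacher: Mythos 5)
Your proof is correct, and it takes a genuinely different route from the paper's. The paper bounds $\E{\mu_t([a,b])^p}$ by partitioning over the events $B_{k,n}^{a,b}(t)=\left\{ x_{k-1}(t)<a,\,x_{k}(t)\geq a,\,x_n(t)\leq b,\,x_{n+1}(t)>b \right\}$ identifying the extreme indices landing in $[a,b]$, then applies Cauchy--Schwarz and H\"older together with stationarity of $(x_{k+l}(t)-l)_{k\in\Z}$ to reduce everything to the convergence of the double series $\sum_{k<n}(n-k+1)^{p-\frac{1}{2}}\p(B_{k,n}^{a,b}(t))^{\frac{1}{2}}$, with the required Gaussian tails obtained through the time-change representation $x_0(t)=w(\langle x_0\rangle_t)$ of \cite[Theorem~II.7.2']{Ikeda:1989} and the reflection principle. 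You instead bound the index count directly by $(b-a+1)+N_++N_-$ and use the ordering \eqref{F3} to get the exact event identity $\{N_+\geq m\}=\{x_{k_m}(t)\leq b\}$ --- which is valid, since monotonicity makes $\{j:\,x_{k_j}(t)\leq b\}$ an initial segment of $\N$ --- thereby converting the moment of the count into a single layer-cake sum of one-particle tail probabilities, with no Cauchy--Schwarz loss and no double series. Your tail estimate is also sound: the exponential local martingale $\exp\big(\theta(x_k(s)-k)-\tfrac{\theta^2}{2}\langle x_k\rangle_s\big)$ is a nonnegative supermartingale, and since \eqref{F4} forces $\langle x_k\rangle_\tau\leq \tau\leq T$ at the relevant stopping time, optional stopping and optimization in $\theta$ give the uniform bound $e^{-\lambda^2/(2T)}$ on $[0,T]$ (indeed, for $N_\pm$ a fixed-time Chernoff bound would already suffice; the running supremum is not needed). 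The trade-offs are mild: the paper's route via Dambis--Dubins--Schwarz yields the sharper $t$-dependent tail $\frac{2\sqrt{2t}}{\sqrt{\pi}}e^{-c^2/(2t)}$ of \eqref{equ_estimate_of_x0_geq_a}, whereas your exponent carries $T$ in place of $t$; but since the proposition only asserts a bound uniform on $[0,T]$, this costs nothing here, and in exchange your argument is shorter, avoids invoking both stationarity and the time-change theorem, and produces the explicit constant $C=M^p\,3^{p-1}\big((b-a+1)^p+2C_{p,T}\big)$.
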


\begin{proof} %
  Set $\|f\|_\infty:=\sup_{u\in\R}|f(u)|$. Using the definition of $\mu_t$, we estimate 
  \begin{align*}
    \E{ \left|\int_{ a }^{ b } f(u)\mu_t(du)\right|^p}
    &\leq \|f\|_\infty^p\E{ \mu_t([a,b])^{p} }\\
    &=\|f\|_\infty^p \E{ \left( \sum_{ l \in \Z } \I_{\left\{ x_{l}(t) \in [a,b] \right\}} \right)^p }\\
    &= \|f\|_\infty^p\sum_{ k<n }   \E{ \left( \sum_{ l=k }^n \I_{\left\{ x_{l}(t) \in [a,b] \right\}} \right)^p \I_{B_{k,n}^{a,b}(t)}} 
  \end{align*}
  where $B_{k,n}^{a,b}(t):=\left\{ x_{k-1}(t)<a,x_{k}(t)\geq a,x_n(t)\leq b,x_{n+1}(t)>b \right\}$. By the Cauchy-Schwartz inequality, we get 
  \begin{align*}
    \E{ \left|\int_{ a }^{ b } f(u)\mu_t(du)\right|^p}&\leq \|f\|_\infty^p \sum_{ k<n }    \E {\left( \sum_{ l=k }^{ n } \I_{\left\{ x_l(t)\in [a,b] \right\}} \right)^{2p}}^{ \frac{1}{ 2 }}\p(B_{k,n}^{a,b}(t))^{\frac{1}{2}}.
  \end{align*}
  Next, using H\"older's inequality, we obtain
  \begin{align*}
  \E{ \left|\int_{ a }^{ b } f(u)\mu_t(du)\right|^p}&\leq \|f\|_\infty^p\sum_{ k<n }   (n-k+1)^{p- \frac{1}{ 2 }}\E{ \sum_{ l=k }^{ n } \I_{\left\{ x_l(t)\in [a,b] \right\}}}^{ \frac{1}{ 2 }}\p(B_{k,n}^{a,b}(t))^{\frac 12}.
  \end{align*}
  Since $(x_{k+l}(t)-l)_{k \in \Z}$ and $(x_k(t))_{k \in \Z}$ have the same distributions, we conclude
  \begin{align*}
      \E{ \sum_{ l=k }^{ n } \I_{\left\{ x_l(t)\in [a,b] \right\}}}&=\sum_{ l=k }^{ n }\E{ \I_{\left\{ x_l(t)-l\in [-l+a,-l+b] \right\}} }\\
    &= \sum_{ l=k }^{ n }\E{ \I_{\left\{ x_0(t)\in [-l+a,-l+b] \right\}} }\le b-a+1.
  \end{align*}
  Consequently, we have
  \begin{align*}
  \E{ \left|\int_{ a }^{ b } f(u)\mu_t(du)\right|^p}&\leq \|f\|_\infty^p (b-a+1)^{ \frac{1}{ 2 }}\sum_{ k<n }   (n-k+1)^{p- \frac{1}{ 2 }}\p(B_{k,n}^{a,b}(t))^{\frac 12}.
  \end{align*}
  
  Now it remains to show that the series $\sum_{ k<n }   (n-k+1)^{p- \frac{1}{ 2 }}\p(B_{k,n}^{a,b}(t))^{\frac 12}$ converges and is uniformly bounded on $[0,T]$.
  We first estimate $\p\left\{ x_0(t)\geq c \right\}$ for every $c\geq 1$. According to \cite[Theorem~II.7.2']{Ikeda:1989}, there exists a Brownian motion $w(t)$, $t\geq 0$, probably on an extended probability space, such that $x_0(t)=w\left( \left\langle x_0 \right\rangle_t \right)$, $t\geq 0$. Since the quadratic variation 
  \[
    \left\langle x_0 \right\rangle_t= \int_{ 0 }^{ t } \frac{ds}{ m_0(s) }\leq t  
  \]
  for all $t\geq 0$, we get 
  \begin{equation} 
  \label{equ_estimate_of_x0_geq_a}
    \begin{split}
      \p\left\{ x_0(t)\geq c \right\}&\leq \p\left\{ \max\limits_{ s \in [0,t] }x_0(s)\geq c \right\} = \p\left\{ \max\limits_{ s \in [0,t] }w\left( \left\langle x_0 \right\rangle_s\right) \geq c \right\}\\
      &\leq \p\left\{ \max\limits_{ s \in [0,t] }w(s)\geq c \right\}= \frac{ \sqrt{ 2 } }{ \sqrt{ \pi t } }\int_{ c }^{ \infty } e^{- \frac{ u^2 }{ 2t }}du\leq \frac{ 2 \sqrt{ 2t } }{ \sqrt{ \pi } }e^{- \frac{ c^2 }{ 2t }}.  
    \end{split}
  \end{equation}
  Similarly, 
  \begin{equation} 
  \label{equ_estimate_of_x0_leq_a}
    \p\left\{ x_0(t)\leq -c \right\}\leq \frac{ 2\sqrt{ 2t } }{ \sqrt{ \pi } }e^{- \frac{ c^2 }{ 2t }}
  \end{equation}
  for all $c\geq 1$. We next rewrite
  \begin{equation} 
  \label{equ_esprasion_for_the_sum}
    \begin{split}
      \sum_{ k<n }   (n-k+1)^{p- \frac{1}{ 2 }}\p(B_{k,n}^{a,b}(t))^{\frac{1}{ 2 }}&= \sum_{ k=0 }^{ +\infty } \sum_{ n=k+1 }^{ +\infty } (n-k+1)^{p- \frac{1}{ 2 }}\p(B_{k,n}^{a,b}(t))^{\frac{1}{ 2 }}\\
      &+\sum_{ k=-\infty }^{ -1 } \sum_{ n=k+1 }^{ 0 } (n-k+1)^{p- \frac{1}{ 2 }}\p(B_{k,n}^{a,b}(t))^{\frac{1}{ 2 }}\\
      &+\sum_{ k=-\infty }^{-1} \sum_{ n=1 }^{ +\infty } (n-k+1)^{p- \frac{1}{ 2 }}\p(B_{k,n}^{a,b}(t))^{\frac{1}{ 2 }}.
    \end{split}
  \end{equation}
  In the first term of the right hand side of \eqref{equ_esprasion_for_the_sum}, we estimate 
  \begin{align*}
    \p\left( B_{k,n}^{a,b}(t) \right)&\leq \p\left\{ x_{k-1}(t)\leq a,x_n(t)\leq b \right\}=\E{ \I_{\left\{ x_{k-1}(t)\leq a \right\}}\I_{\left\{ x_n(t)\leq b \right\}} }\\
    &\leq \sqrt{ \E{ \I_{\left\{ x_{k-1}(t)\leq a \right\}}^2 } }\sqrt{ \E{ \I_{\left\{ x_n(t)\leq b \right\}}^2 } }\\
    &=\sqrt{ \p\left\{ x_{k-1}(t)\leq a \right\} }\sqrt{ \p\left\{ x_n(t)\leq b \right\} }.
  \end{align*}
  Since the distributions of the random variables $x_l(t)-l$ and $x_0(t)$ coincide, we get
  \begin{align*}
    \p\left\{ x_{k-1}(t)\leq a \right\}=\p\left\{ x_0(t)\leq -(k-1)+a \right\} \leq \frac{ 2 \sqrt{ 2t } }{ \sqrt{ \pi } }e^{- \frac{ (k-1-a)^2 }{ 2t }}
  \end{align*}
  for all $k\geq a+2$, and
  \begin{align*}
    \p\left\{ x_n(t)\leq b \right\}=\p\left\{ x_0(t)\leq -(n-b) \right\} \leq \frac{ 2 \sqrt{ 2t } }{ \sqrt{ \pi } }e^{-\frac{ (n-b)^2 }{ 2t }}
  \end{align*}
  for all $n\geq b+1$, by~\eqref{equ_estimate_of_x0_leq_a}. This implies that 
  $$
  \sum_{ k=0 }^{ +\infty } \sum_{ n=k+1 }^{ +\infty } (n-k+1)^{p- \frac{1}{ 2 }}\p(B_{k,n}^{a,b}(t))^{\frac 12}\le C
  $$ 
  for all $t\in[0,T]$ and some constant $C>0$.
  Similarly, estimating $\p( B_{k,n}^{a,b}(t) )$ by $\p\big\{ x_{k}(t)\geq a, x_{n+1}(t)\geq b \big\}$ and $\p\left\{ x_k(t)\geq a,x_n(t)\leq b \right\}$ in the second and third terms of the right hand side of~\eqref{equ_esprasion_for_the_sum}, respectively, and using~\eqref{equ_estimate_of_x0_geq_a},~\eqref{equ_estimate_of_x0_leq_a}, we get 
  \[
    \sum_{ k=-\infty }^{ -1 } \sum_{ n=k+1 }^{ 0 } (n-k+1)^{p- \frac{1}{ 2 }}\p(B_{k,n}^{a,b}(t))^{\frac 12}+\sum_{ k=-\infty }^{-1} \sum_{ n=1 }^{ +\infty } (n-k+1)^{p- \frac{1}{ 2 }}\p(B_{k,n}^{a,b}(t))^{\frac 12}\le C 
  \]
  for all $t\in[0,T]$.
This completes the proof of the lemma.
\end{proof}

We will next show that $\sigma^2_t(f)$ is strictly positive for some time $t>0$ and function $f$. The following lemma is true.
\begin{lemma}
    Let $f\in\Cf_b^3(\R)$ be an odd, 1-periodic function. Then $\frac{\sigma^2_t(f)}{t}\to \left(f'(0)\right)^2$ as $t\to 0+$. In particular, there exists $t>0$ such that $\sigma^2_t(f)>0$ if $f'(0)\not =0$. 
\end{lemma}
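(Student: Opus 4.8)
The plan is to evaluate the three ingredients of $\sigma_t^2(f)=\Var(A_{0,t}f)+2\sum_{k\ge1}\cov(A_{0,t}f,A_{k,t}f)$ to leading order as $t\to0+$, exploiting that on small time scales the flow decouples into independent Brownian motions. Since $f$ is $1$-periodic and each $x_l$ starts at the integer $l$, we have $f(x_l(t))=f(x_l(t)-l)$, and Proposition~\ref{pro_special_mmaf} together with Lemma~\ref{lem_estimate_of_appearing_gaps_in_intervals} shows that inside any fixed window the probability that a collision occurs before time $t$ is at most $Ce^{-c/t}$. Hence, off an event of probability $O(e^{-c/t})$, one may replace $x_l(t)$ by $l+w_l(t)$ with the $w_l$ independent standard Brownian motions; by Cauchy--Schwarz and the uniform moment bound of Proposition~\ref{prop_finiteness_of_all_moments_for_bounded_functions}, the resulting error in every variance and covariance below is $o(t)$.

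Under this replacement only the two particles started at the endpoints of the relevant unit interval contribute with non-negligible probability, and the remaining particles, the rare collisions, and the difference between $\I_{\{w\in[0,1)\}}$ and $\I_{\{w\ge0\}}$ all affect the second moments only at order $o(t)$. Thus, up to such errors, $A_{0,t}f=f(w_{-1}(t))\I_{\{w_{-1}(t)\ge0\}}+f(w_0(t))\I_{\{w_0(t)<0\}}$ and $A_{1,t}f=f(w_0(t))\I_{\{w_0(t)\ge0\}}+f(w_1(t))\I_{\{w_1(t)<0\}}$. Because $f$ is odd we have $f(0)=0$ and $f''(0)=0$, so a Taylor expansion to third order gives $f(w)=f'(0)w+O(w^3)$ with a uniform remainder since $f\in\Cf_b^3$; as $w_l(t)\sim N(0,t)$, the cubic term perturbs the second-order quantities only at order $O(t^2)$. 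Writing $w^+=\max(w,0)$ and $w^-=\max(-w,0)$ we obtain $A_{0,t}f=f'(0)\left(w_{-1}(t)^+-w_0(t)^-\right)+o(t)$ and $A_{1,t}f=f'(0)\left(w_0(t)^+-w_1(t)^-\right)+o(t)$. The elementary Gaussian identities $\E{(w(t)^+)^2}=t/2$ and $\E{w(t)^+}=\sqrt{t/(2\pi)}$, together with $w^+w^-=0$ and the independence of distinct $w_l$, then yield $\Var(A_{0,t}f)=(f'(0))^2\left(1-\tfrac1\pi\right)t+o(t)$ and $\cov(A_{0,t}f,A_{1,t}f)=(f'(0))^2\tfrac{t}{2\pi}+o(t)$.

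For the remaining covariances I would bound the whole tail at once. By the covariance inequality for strongly mixing sequences (Davydov) applied to $A_{0,t}f\in\fM_{-\infty}^0$ and $A_{k,t}f\in\fM_k^{\infty}$, together with the estimate $\alpha_0(k)\le Ce^{-\beta_t\sqrt k}$ of Proposition~\ref{pro_mixing_property} and the uniform third-moment bound of Proposition~\ref{prop_finiteness_of_all_moments_for_bounded_functions}, one gets $|\cov(A_{0,t}f,A_{k,t}f)|\le C\,\alpha_0(k)^{1/3}\le Ce^{-(\beta_t/3)\sqrt k}$, whence $\sum_{k\ge2}|\cov(A_{0,t}f,A_{k,t}f)|=O(\beta_t^{-2})$. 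Since Lemma~\ref{lem_estimate_of_appearing_gaps_in_intervals} gives $\beta_t\sim(8\sqrt2\,t)^{-1}$ as $t\to0+$, this tail is $O(t^2)=o(t)$. Collecting the three contributions,
\[
  \sigma_t^2(f)=(f'(0))^2\Big(1-\tfrac1\pi\Big)t+2\cdot(f'(0))^2\tfrac{t}{2\pi}+o(t)=(f'(0))^2\,t+o(t),
\]
so $\sigma_t^2(f)/t\to(f'(0))^2$; in particular, when $f'(0)\neq0$ the limit is strictly positive, hence $\sigma_t^2(f)>0$ for all sufficiently small $t>0$.

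The main obstacle is the first step: justifying that the local replacement of the flow by independent Brownian motions, the truncation to the two endpoint particles, the replacement of $\I_{\{w\in[0,1)\}}$ by $\I_{\{w\ge0\}}$, and the Taylor remainder each contribute only $o(t)$ to the second moments. Every one of these is either an event of probability $O(e^{-c/t})$ or an $L^2$ remainder of size $O(t^{3/2})$, so the bounds are quantitatively comfortable, but they must be combined through Cauchy--Schwarz with the uniform moment bounds of Proposition~\ref{prop_finiteness_of_all_moments_for_bounded_functions}. The one genuinely global point is the tail estimate, where it is essential that the mixing rate $\beta_t$ blows up like $1/t$, so that the sum over $k$ is of smaller order than $t$ itself.
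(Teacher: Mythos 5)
Your proof is correct, but it takes a genuinely different route from the paper's. The paper's central trick is to re-center the windows: it introduces $\tilde A_{k,t}f=\int_{k-1/2}^{k+1/2}f(u)\,\mu_t(du)$, so that at small times each window contains exactly one particle; oddness plus periodicity give $\E{\tilde A_{k,t}f}=0$, the variance term reduces by Taylor's formula to $\E{f^2(w_0(t))}=(f'(0))^2t+o(t)$, and \emph{all} covariances are $o(t)$ via the coupling of Lemmas~\ref{lem_gaps_in_mmaf} and~\ref{lem_estimate_of_appearing_gaps_in_intervals}. The price is an extra identification step: the paper must show $\sigma_t^2(f)=\tilde\sigma_t^2(f)$, which it does by proving $\E{\big(Y_t^n(f)-\tilde Y_t^n(f)\big)^2}\to0$ and comparing the two CLT limits. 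You instead keep the original windows $[k-1,k]$ and do the boundary-particle bookkeeping head-on: your half-Gaussian computations are correct (with $\E{(w(t)^+)^2}=t/2$ and $\E{w(t)^+}=\sqrt{t/(2\pi)}$ one indeed gets $\Var(A_{0,t}f)=(f'(0))^2\big(1-\tfrac1\pi\big)t+o(t)$ and $\cov(A_{0,t}f,A_{1,t}f)=(f'(0))^2\tfrac{t}{2\pi}+o(t)$, and the $1/\pi$ terms cancel exactly), which makes visible a cancellation that the paper's shifted decomposition hides, at the cost of a nontrivial nearest-neighbor covariance. Your tail treatment also differs: the paper bounds each $\cov(\tilde A_{0,t}f,\tilde A_{k,t}f)$ directly through the coupling, whereas you invoke Davydov's inequality with Proposition~\ref{pro_mixing_property}; note that as stated its constants depend on $t$, so — as you in effect do by citing the asymptotics $\beta_t\sim(8\sqrt2\,t)^{-1}$ — one must use the quantitative form $C_Te^{-\beta_T(t)[(\sqrt{k}-\sqrt2)\vee1]}$ from Lemma~\ref{lem_estimate_of_appearing_gaps_in_intervals}, with $C_T$ uniform on $[0,T]$ and $t\beta_T(t)\to\tfrac{1}{8\sqrt2}$, to conclude the tail is $O(\beta_T(t)^{-2})=O(t^2)=o(t)$. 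The replacement step (coupling to independent Brownian motions off an event of probability $O(e^{-c/t})$, truncating to the two endpoint particles and the indicators, Taylor remainder using $f(0)=f''(0)=0$) is exactly the paper's mechanism via Proposition~\ref{pro_special_mmaf}, with every error absorbed through Cauchy--Schwarz and the uniform moment bounds of Proposition~\ref{prop_finiteness_of_all_moments_for_bounded_functions}; so your outline is complete, merely more computational at its center and slightly more delicate in the uniformity of the mixing constants.
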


\begin{proof}
    For the proof of the lemma, we will use the fact that the particles
in the modified massive Arratia flow become more independent for small
times $t$.

Let 
\[
\tilde{A}_{k,t}f:=\int_{k-\frac{1}{2}}^{k+\frac{1}{2}}f(u)\mu_{t}(du)
\]
for each $k\in\Z$ and $t\ge0$. Using the fact that $f(-x)=-f(x)$ for all $x\in\R$ and the periodicity
of $f$, we conclude that $\e\big[\tilde{A}_{k,t}f\big]=0$. Let 
\[
\tilde{Y}_{t}^{n}(f):=\frac{1}{\sqrt{n}}\sum_{k=1}^{n}\tilde{A}_{k,t}f=\frac{1}{\sqrt{n}}\sum_{k=1}^{n}\left(\tilde{A}_{k,t}f-\E{\tilde{A}_{k,t}f}\right),\quad n\ge1.
\]
Similarly to the proof of Proposition~\ref{pro_mixing_property}, one can show that the family $\big(\tilde{A}_{k,t}f\big)_{k\in\Z}$ satisfies the
mixing condition with the same bound for the mixing coefficients.  Hence, $\big\{ \tilde{Y}_{t}^{n}(f)\big\} _{n\ge0}$
converges to a Gaussian random variable with mean $0$ and variance
\begin{align*}
\tilde{\sigma}_{t}^{2}(f) & :=\Var\tilde{A}_{0,t}f+2\sum_{k=1}^{\infty}\cov\left(\tilde{A}_{0,t}f,\tilde{A}_{k,t}f\right)\\
 & =\E{\left(\tilde{A}_{0,t}f\right)^{2}}+2\sum_{k=1}^{\infty}\E{\tilde{A}_{0,t}f\tilde{A}_{k,t}f}.
\end{align*}
On the other side, one can estimate 
\[
\E{\left(Y_{t}^{n}(f)-\tilde{Y}_{t}^{n}(f)\right)^{2}}=\frac{1}{n}\E{\left(\xi_{n}-\E{\xi_{n}}\right)^{2}}\le\frac{1}{n}\E{\xi_{n}^{2}},
\]
where 
\[
\xi_{n}:=\int_{0}^{\frac{1}{2}}f(u)\mu_{t}(du)-\int_{n}^{n+\frac{1}{2}}f(u)\mu_{t}(du).
\]
Due to Lemma~\ref{prop_finiteness_of_all_moments_for_bounded_functions} and the stationarity of the modified
massive Arratia flow, we estimate
\[
\E{\xi_{n}^{2}}\le2\E{\left(\int_{0}^{\frac{1}{2}}f(u)\mu_{t}(du)\right)^{2}}+2\E{\left(\int_{\frac{1}{2}}^{1}f(u)\mu_{t}(du)\right)^{2}}<\infty.
\]
Thus, $\e \big[\big(Y_{t}^{n}(f)-\tilde{Y}_{t}^{n}(f)\big)^{2}\big]\to0$
as $n\to\infty$. Therefore, according to Theorem~\ref{the_clt_for_mmaf}, $\big\{ \tilde{Y}_{t}^{n}(f)\big\} _{n\ge0}$
converges in distribution to the same limit as $\left\{ Y_{t}^{n}(f)\right\} _{n\ge0}$
. This implies that $\sigma_{t}^{2}(f)=\tilde{\sigma}_{t}^{2}(f)$.

Let $\left(w_{k}\right)_{k\in\Z}$ be a family of Brownian motions
that were used for the definition $X_{k}^{l,n}$ in Proposition~\ref{pro_special_mmaf}. Using Proposition~\ref{pro_special_mmaf}, we get
\begin{align}
\E{\left(\tilde{A}_{0,t}f\right)^{2}} & =\E{f^{2}(w_{0}(t))}\nonumber \\
 & +\E{\left(\left(\tilde{A}_{0,t}f\right)^{2}-f^{2}(w_{0}(t))\right)\I_{A}},\label{eq:expectation_of_tilde_A_0}
\end{align}
where $A:=\left\{ |X_{0}(t)|\ge\frac{1}{2}\right\} \cup\left\{ X_{-1}(t)\ge-\frac{1}{2}\right\} \cup\left\{ X_{1}(t)\le\frac{1}{2}\right\} .$
By Taylor's formula and the equality $f(0)=0$, that follows from the fact that $f$ is odd function,
the first term of the right hand side of the equality above can be
rewritten as
\begin{align*}
\E{f^{2}(w_{0}(t))} & =f^{2}(0)+\frac{1}{2}\frac{d^{2}f^{2}}{dx^{2}}(0)\E{w_{0}^{2}(t)}+o(t)\\
 & =\left(f'(0)\right)^{2}t+o(t).
\end{align*}
Using Hölder's inequality, the square of the second term of (\ref{eq:expectation_of_tilde_A_0})
can be estimated by
\[
I_{t}:=\left(\E{\left(\tilde{A}_{0,t}f\right)^{4}}+\E{f^{4}(w_{0}(t))}\right)\p\left(A\right).
\]
Now, by the boundedness of $f$ and Proposition~\ref{prop_finiteness_of_all_moments_for_bounded_functions}, there exists $C>0$ such that
\begin{align*}
I_{t} & \le C\left(\p\left\{ |X_{0}(t)|\ge\frac{1}{2}\right\} +\p\left\{ X_{-1}(t)\ge-\frac{1}{2}\right\} +\p\left\{ X_{1}(t)\le\frac{1}{2}\right\} \right)\\
 & \le3C\p\left\{ |X_{0}(t)|\ge\frac{1}{2}\right\} \le3C\p\left\{ |w_{0}(t)|\ge\frac{1}{2}\right\} \le Ce^{-\frac{1}{8t}}.
\end{align*}
Thus,
\[
\frac{1}{t}\E{\left(\tilde{A}_{0,t}f\right)^{2}}\to\left(f'(0)\right)^{2}
\]
as $t\to0$.

Similarly, we estimate $\e\big[\tilde{A}_{0,t}f\tilde{A}_{k,t}f\big]$ for
each $k\in\N$. Using the notation from the proof of Proposition~\ref{pro_mixing_property} and denoting
\[
\tilde{A}_{k,t}^{l}f:=\int_{k-\frac{1}{2}}^{k+\frac{1}{2}}f(u)\mu_{t}^{l}(du)\quad\mbox{and}\quad\tilde{A}_{k,t}^{\pm}f:=\int_{k-\frac{1}{2}}^{k+\frac{1}{2}}f(u)\mu_{t}^{l,\pm}(du),\quad k\in\Z,
\]
we estimate for $k\ge1$ and $l:=\left\lfloor \frac{k}{2}\right\rfloor $
\begin{align*}
\E{\tilde{A}_{0,t}f\tilde{A}_{k,t}f} & =\E{\tilde{A}_{0,t}^{l}f\tilde{A}_{k,t}^{l}f}\\
 & =\E{\tilde{A}_{0,t}^{l}f\tilde{A}_{k,t}^{l}f\I_{B_{l,k-l}^{+}\cap B_{l,l}^{-}}}+\E{\tilde{A}_{0,t}^{l}f\tilde{A}_{k,t}^{l}f\I_{\left(B_{l,k-l}^{+}\cap B_{l,l}^{-}\right)^{c}}}.
\end{align*}
By Lemma~\ref{lem_estimate_of_appearing_gaps_in_intervals},  Proposition~\ref{prop_finiteness_of_all_moments_for_bounded_functions}, and Hölder's inequality, the square of the
second term of the equality above can be estimated by
\[
\E{\left(\tilde{A}_{0,t}^{l}f\right)^{2}}\E{\left(\tilde{A}_{k,t}^{l}f\right)^{2}}\left(\p\left(\left(B_{l,k-l}^{+}\right)^{c}\right)+\p\left(\left(B_{l,l}^{-}\right)^{c}\right)\right)\le C_{T}e^{-\beta_{T}(t)\left[\left(\sqrt{k}-\sqrt{2}\right)\vee1\right]}
\]
for all $t\in[0,T]$, where the function $\beta_{T}:(0,T]\to(0,\infty)$
and the constant $C_{T}$ depend only on $T$ and $t\beta_{T}(t)\to\frac{1}{8\sqrt{2}}$
as $t\to0$. Using now Lemma~\ref{lem_gaps_in_mmaf}, we get
\begin{align*}
\E{\tilde{A}_{0,t}^{l}f\tilde{A}_{k,t}^{l}f\I_{B_{l,k-l}^{+}\cap B_{l,l}^{-}}} & =\E{\tilde{A}_{0,t}^{l,-}f\tilde{A}_{k,t}^{l+1,+}f\I_{B_{l,k-l}^{+}\cap B_{l,l}^{-}}}\\
 & =\E{\tilde{A}_{0,t}^{l,-}f\tilde{A}_{k,t}^{l+1,+}f}+\E{\tilde{A}_{0,t}^{l,-}f\tilde{A}_{k,t}^{l+1,+}f\I_{\left(B_{l,k-l}^{+}\cap B_{l,l}^{-}\right)^{c}}}.
\end{align*}
We can similarly estimate  the square of the second term of the equality
above by the expression $C_{T}e^{-\beta_{T}(t)\left[\left(\sqrt{k}-\sqrt{2}\right)\vee1\right]}$.
By the independence of $\tilde{A}_{0,t}^{l,-}f$ and $\tilde{A}_{k,t}^{l+1,+}f$,
the first term of the equality above equals $\e\big[\tilde{A}_{0,t}^{l,-}f\big]\e\big[\tilde{A}_{k,t}^{l+1,+}f\big].$
Furthermore,
\[
0=\E{\tilde{A}_{0,t}f}\E{\tilde{A}_{k,t}f}=\E{\tilde{A}_{0,t}^{l,-}f}\E{\tilde{A}_{k,t}^{l+1,+}f}+R_{k,t},
\]
where 
\[
|R_{k,t}|^{2}\le C_{T}e^{-\beta_{T}(t)\left[\left(\sqrt{k}-\sqrt{2}\right)\vee1\right]}
\]
for all $t\in[0,T]$ and $k\ge1$. Combining the estimates above,
we have shown that 
\begin{align*}
\left|\cov\left(\tilde{A}_{0,t}f\tilde{A}_{k,t}f\right)\right|&\le\left|\E{\tilde{A}_{0,t}f\tilde{A}_{k,t}f}-\E{\tilde{A}_{0,t}f}\E{\tilde{A}_{k,t}f}\right|\\
&\le C_{T}e^{-\frac{\beta_{T}(t)}{2}\left[\left(\sqrt{k}-\sqrt{2}\right)\vee1\right]}.
\end{align*}
Using the dominated convergence theorem and the fact that $t\beta_{T}(t)\to\frac{1}{8\sqrt{2}},$
we conclude that 
\[
\frac{1}{t}\sum_{k=1}^{\infty}\left|\cov\left(\tilde{A}_{0,t}f,\tilde{A}_{k,t}f\right)\right|\le \frac{1}{t}\sum_{k=1}^{\infty}C_{T}e^{-\frac{\beta_{T}(t)}{2}\left[\left(\sqrt{k}-\sqrt{2}\right)\vee1\right]}\to0
\]
as $t\to0+$. This completes the proof of the statement. 
\end{proof}

\subsection*{Acknowledgements}

Vitalii Konarovskyi was partly supported by the Deutsche Forschungsgemeinschaft (DFG, German Research Foundation) – SFB 1283/2 2021 – 317210226. He also thanks the Max Planck Institute for Mathematics in the Sciences for its warm hospitality, where a part of this research was carried out.

\setlength{\bibsep}{0pt}

\providecommand{\bysame}{\leavevmode\hbox to3em{\hrulefill}\thinspace}
\providecommand{\MR}{\relax\ifhmode\unskip\space\fi MR }
\providecommand{\MRhref}[2]{%
	\href{http://www.ams.org/mathscinet-getitem?mr=#1}{#2}
}
\providecommand{\href}[2]{#2}

\end{document}